\documentclass[11pt, leqno]{amsart}

\usepackage{amsmath}
\usepackage{amssymb,amsfonts}
\usepackage{cancel}
\usepackage{comment}
\usepackage{enumerate}
\usepackage{enumitem}
\usepackage{geometry}
\usepackage{hyperref}
\usepackage{a4wide}
\usepackage{mathrsfs}
\usepackage[dvipsnames]{xcolor}
\usepackage[all,arc]{xy}

\newtheorem{thm}{Theorem}[section]

\newtheorem{prop}[thm]{Proposition}
\newtheorem{lem}[thm]{Lemma}

\theoremstyle{definition}

\theoremstyle{remark}
\newtheorem{rem}[thm]{Remark}

\makeatletter
\let\c@equation\c@thm
\makeatother
\numberwithin{equation}{section}

\newcommand{\sumao}{\sum_{\alpha=n+1}^{n+p}}
\newcommand{\sumbo}{\sum_{\beta=n+1}^{n+p}}
\newcommand{\sumat}{\!\!\sum_{\alpha=n+2}^{n+p}\!\!}
\newcommand{\sumbt}{\!\!\sum_{\beta=n+2}^{n+p}\!\!}
\newcommand{\sumabo}{\!\!\sum_{\alpha,\beta=n+1}^{n+p}\!\!}
\newcommand{\sumabt}{\!\!\sum_{\alpha,\beta=n+2}^{n+p}\!\!}
\newcommand{\tr}{\text{tr}}
\newcommand{\dvol}{d\text{vol}}
\newcommand{\Vol}{\text{Vol}}
\newcommand{\Id}{\text{Id}}
\newcommand{\diverg}{\text{div}}

\title{Classification of quadratically pinched self-shrinkers in higher codimension}

\thanks{The authors are partially supported by INdAM-GNSAGA}

\subjclass[2020]{53C42, 53C21}

\keywords{Self-shrinkers, high codimension, quadratic pinching, weighted manifolds, }

\author[Debora Impera]{Debora Impera}
\address[Debora Impera]{Dipartimento di Scienze Matematiche ``Giuseppe Luigi Lagrange", Politecnico di Torino, Corso Duca degli Abruzzi, 24, Torino, Italy, I-10129}
\email{debora.impera@polito.it}

\author[Michele Rimoldi]{Michele Rimoldi}
\address[Michele Rimoldi]{Dipartimento di Scienze Matematiche ``Giuseppe Luigi Lagrange", Politecnico di Torino, Corso Duca degli Abruzzi, 24, Torino, Italy, I-10129}
\email{michele.rimoldi@polito.it}

\author[Francesco Ruatta]{Francesco Ruatta}
\address[Francesco Ruatta]{Dipartimento di Scienze Matematiche ``Giuseppe Luigi Lagrange", Politecnico di Torino, Corso Duca degli Abruzzi, 24, Torino, Italy, I-10129}
\email{francesco.ruatta@polito.it}

\hypersetup{
    colorlinks=true,
    linkcolor=RawSienna,
    citecolor=RawSienna,
    filecolor=magenta,      
    urlcolor=cyan,
    pdftitle={Overleaf Example},
    pdfpagemode=FullScreen,
}

\allowdisplaybreaks

\begin{document}

\begin{abstract}\label{abstract}
We classify properly immersed self-shrinkers of the mean curvature flow in arbitrary codimension under a quadratic pinching condition of Andrews-Baker type on the second fundamental form that is preserved along the flow. Under this assumption, such self-shrinkers reduce effectively to codimension one and are therefore generalized self-shrinking cylinders. In contrast to previous works, our approach is purely elliptic: it relies on parabolicity in a weighted setting and is tailored specifically to self-shrinkers, rather than to general ancient solutions of the flow. This allows us to avoid assuming any uniform pinching condition, to treat in any dimension the sharp Andrews-Baker pinching constant $\frac{4}{3n}$ and hence to sharpen, in the self-shrinker setting, the pinching constants appearing in recent classification results for ancient solutions.
\end{abstract}

\maketitle


\section{Introduction}\label{section_introduction}

Let $p\geq1$ and let $x_{0}:M^n\to\left(\mathbb{R}^{n+p},\langle\cdot,\cdot\rangle\right)$ be a smooth isometric immersion of a connected $n$-dimensional Riemannian manifold without boundary as a complete submanifold in the $(n+p)$-dimensional Euclidean space. A one-parameter family $\{x(\cdot,t)\}_{t\in[t_0,T)}$ of smooth immersions with corresponding images $M_t\doteq x(\cdot,t)(M)$ is said to be a solution of the \textit{mean curvature flow} (MCF) starting from $M_{t_0}$ if it satisfies the initial value problem
\begin{equation}\label{MCF}
    \begin{cases}
        \partial_t x(\cdot,t)=H(\cdot,t),
        \\x(\cdot,t_0)=x_0.
    \end{cases}
\end{equation}
Here $H(\cdot,t)$ denotes the mean curvature vector of $M_t$ at $x(\cdot, t)$, defined as the trace of the second fundamental form $B$. 

A solution to \eqref{MCF} is called ancient if $t_0=-\infty$. The simplest examples of ancient solutions are given by homothetically shrinking solutions. A {\it self-shrinker} of the MCF is defined as the time-slice at $t=-1$ of a self-shrinking solution to \eqref{MCF} that becomes extinct at the space-time point $(0,0)$. This notion admits an alternative characterization: a smooth isometric immersion $x:M^n\to(\mathbb{R}^{n+p},\langle\cdot,\cdot\rangle)$ is a self-shrinker if it satisfies the quasilinear elliptic system
\begin{equation}\label{self_shrinker_equation}
    H=-\frac{1}{2} x^\perp,
\end{equation}
where $(\cdot)^{\perp}$ denotes the projection on the normal bundle of $M$.

It is well-known that self-shrinkers play a fundamental role in the study of the mean curvature flow, as they model singularity formation and arise naturally as tangent flows at singular points. 

In the codimension one setting, extensive work has been devoted to classification results under convexity-type assumptions. In particular, immersed self-shrinkers of dimension one in $\mathbb R^2$ are either straight lines through the origin, the circle $\mathbb{S}^{1}(\sqrt{2})$, or locally mean convex curves known as {\it Abresch-Langer curves} \cite{abresch_langer_1986}, named after their discoverers U. Abresch and J. Langer.
This classification was later extended to higher dimensions. For $n\geq 2$, G. Huisken, \cite{huisken_1990_asymptotic_behavior}, proved that the only smooth, closed self-shrinkers $M^n\rightarrow\mathbb R^{n+1}$ ($n\geq 2$) with nonnegative mean curvature are the round spheres $\mathbb{S}^n(\sqrt{2n})$. Moreover, in the complete (possibly non-compact) setting, Huisken showed in \cite{huisken_1993_local_global_behaviour} that smooth self-shrinkers $M^n\rightarrow\mathbb R^{n+1}$ ($n\geq 2$) with nonnegative mean curvature, polynomial volume growth, and bounded second fundamental form must be isometric either to a product $\Gamma\times\mathbb R^{n-1}$, where $\Gamma$ is an Abresch-Langer curve, or to a generalized cylinder $\mathbb S^k(\sqrt{2k})\times\mathbb R^{n-k}$ ($0\leq k\leq n$). Subsequently, T. Colding and W. Minicozzi, \cite{colding_minicozzi_2012_generic_singularities}, showed that the boundedness hypothesis on $|B|$ can be removed, leading to the following:

\begin{thm}[\cite{colding_minicozzi_2012_generic_singularities}, Theorem 0.17]\label{colding_minicozzi_theorem}
    Let $x:M^n\to\mathbb{R}^{n+1}$ be a complete immersed self-shrinker without boundary. If $M$ has polynomial volume growth and nonnegative mean curvature ($H\geq 0$), then it is isometric either to the product $\Gamma\times \mathbb R^{n-1}$, where $\Gamma$ is an Abresch-Langer curve, or to $\mathbb S^k(\sqrt{2k})\times\mathbb R^{n-k}$ ($0\leq k\leq n$).
\end{thm}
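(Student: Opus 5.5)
We follow the Colding--Minicozzi strategy, which rests on the drift Laplacian $\mathcal{L}=\Delta-\frac12\langle x,\nabla\cdot\rangle$. This operator is self-adjoint with respect to the Gaussian weight $e^{-|x|^2/4}\dvol$. Polynomial volume growth makes every weighted integral below finite once we have the right integrability.

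\textbf{Step 1 (Simons-type identities).} For a hypersurface self-shrinker with scalar mean curvature $H=\langle \vec H,\nu\rangle=-\frac12\langle x,\nu\rangle$, a direct computation gives
\[
\mathcal{L}H+\Big(|B|^2+\tfrac12\Big)H=H,\qquad \mathcal{L}|B|+\Big(|B|^2-\tfrac12\Big)|B|=\frac{|\nabla B|^2-|\nabla|B||^2}{|B|}\ge 0 .
\]
In operator form, $L=\mathcal{L}+|B|^2+\tfrac12$ satisfies $LH=H$ and $L|B|\ge|B|$.

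\textbf{Step 2 (strict positivity).} Since $\mathcal{L}H=(\tfrac12-|B|^2)H$ and $H\ge0$, the strong maximum principle applied to this linear equation gives a dichotomy: either $H>0$ everywhere, or $H\equiv0$. If $H\equiv0$, the shrinker equation gives $\langle x,\nu\rangle\equiv0$, so $M$ is a cone. Being smooth and complete, it must be a hyperplane through the origin, i.e.\ the case $k=0$.

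\textbf{Step 3 (the quotient $|B|/H$).} Assume now $H>0$ and set $u=|B|/H$. Using the two identities of Step 1 one computes
\[
\mathcal{L}_H u:=\mathcal{L}u+2\langle\nabla\log H,\nabla u\rangle\ \ge\ \frac{|\nabla B|^2-|\nabla|B||^2}{H|B|}\ \ge 0 ,
\]
where $\mathcal{L}_H$ is self-adjoint for the weight $H^2e^{-|x|^2/4}$. We multiply this inequality by $\phi^2 u$, with $\phi$ a cutoff, and integrate against $H^2e^{-|x|^2/4}$. The goal is to conclude that $\nabla u\equiv0$ and $|\nabla B|=|\nabla|B||$.

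\textbf{Main obstacle.} Without a bound on $|B|$, the error terms require weighted integrability of $|B|^2$, $|\nabla B|^2$ and $|\nabla H|^2$. I would obtain it by the stability-type argument of Colding--Minicozzi. Since $H>0$ and $LH=H$, the operator $L-1$ admits a positive solution, so for compactly supported $\eta$
\[
\int \eta^2\Big(|B|^2+\tfrac12\Big)e^{-|x|^2/4}\le\int\big(|\nabla\eta|^2+\eta^2\big)e^{-|x|^2/4}.
\]
With $\eta$ a cutoff on balls and polynomial volume growth, this gives $\int|B|^2e^{-|x|^2/4}<\infty$. Bootstrapping with $\eta=|B|\phi$ and $\eta=|\nabla B|\phi$, using the second identity, then gives finiteness of $\int(|B|^4+|\nabla B|^2)e^{-|x|^2/4}$. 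With these bounds the cutoff terms vanish as the radius tends to infinity, and we get $|B|=CH$ and $|\nabla B|=|\nabla|B||$.

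\textbf{Step 4 (classification).} The equality $|\nabla B|=|\nabla|B||$ is the classical Huisken rigidity: wherever $|B|\neq0$ it forces $\nabla B=\nabla|B|\otimes B/|B|$. Combined with Codazzi, this shows that the rank $k$ of $B$ is constant, the kernel distribution is parallel, and $B$ is a function times a parallel tensor on its image. By de Rham splitting, $M$ is then isometric to $N^k\times\mathbb{R}^{n-k}$ with $N^k$ totally umbilic in $\mathbb{R}^{k+1}$.

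If $k\ge2$, $N$ is a sphere, and the shrinker equation fixes its radius as $\sqrt{2k}$. If $k=1$, $N$ is a curve solving the self-shrinker ODE with curvature $H>0$. This is a locally convex Abresch--Langer curve or the circle $\mathbb S^1(\sqrt2)$, which is the case $k=1$ of the spheres.
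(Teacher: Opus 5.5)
The paper does not prove this statement. It quotes it from Colding--Minicozzi and uses it as a black box at the end of the proof of Theorem \ref{main_theorem}. Your outline is the Colding--Minicozzi version of Huisken's argument, and its skeleton is sound:
\begin{enumerate}[label=(\roman*)]
\item the dichotomy $H>0$ or $H\equiv0$ by the strong maximum principle;
\item the stability inequality coming from the positive solution $H$ of $LH=H$;
\item $\mathcal L_H$-subharmonicity of $u=|B|/H$ for the weight $H^2e^{-|x|^2/4}$;
\item Huisken's rigidity from $|\nabla B|=|\nabla|B||$, followed by a splitting.
\end{enumerate}

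\textbf{The bootstrap does not hold up.} Put $\eta=\phi|B|$ into your stability inequality and compare with the Simons identity for $|B|$ tested against $\phi^2|B|$. The $|B|^4$ terms cancel exactly, because $LH=H$ and $L|B|\ge|B|$ carry the same constant, so the inequality is borderline for $|B|$. All you obtain is
\[
\int\phi^2\big(|\nabla B|^2-|\nabla|B||^2\big)e^{-|x|^2/4}\le\int|B|^2|\nabla\phi|^2e^{-|x|^2/4}.
\]
This gives no bound on $\int|B|^4e^{-|x|^2/4}$ or $\int|\nabla B|^2e^{-|x|^2/4}$. Choosing $\eta=\phi|\nabla B|$ only brings in $\nabla^2B$.

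\textbf{The bootstrap is not needed.} In Step 3, after integrating by parts and applying Cauchy--Schwarz, you get
\[
\frac12\int\phi^2|\nabla u|^2H^2e^{-|x|^2/4}+\int\phi^2\big(|\nabla B|^2-|\nabla|B||^2\big)e^{-|x|^2/4}\le 2\int u^2|\nabla\phi|^2H^2e^{-|x|^2/4}=2\int|B|^2|\nabla\phi|^2e^{-|x|^2/4}.
\]
So the bound $\int|B|^2e^{-|x|^2/4}<\infty$, which you correctly derive from the stability inequality with $\eta$ a cutoff, is all that is required. Delete the bootstrap and the claim that $|\nabla B|^2$ and $|\nabla H|^2$ must be integrable.

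This is exactly the mechanism the paper uses in its proof of Theorem \ref{thm:extensionsmoczyck}. There it gets $\int|B_\nu|^2e^{-f}\le\frac12\Vol_f(M)$ from the equation for $\log|H|$. It then applies an $L^2$-Liouville theorem for $\Delta_h$ with $h=f-\log|H|^2$, using $\|\,|B_\nu|/|H|\,\|_{L^2(e^{-h})}=\|B_\nu\|_{L^2(e^{-f})}$.

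\textbf{Step 4 needs two details made explicit.} Both are standard and make the factor $N^k$ umbilic:
\begin{enumerate}[label=(\roman*)]
\item Writing $\nabla B=\omega\otimes B$, Codazzi gives $\omega_ih_{jk}=\omega_jh_{ik}$. Hence rank $\ge2$ forces $\omega=0$, i.e. $\nabla B=0$.
\item A hypersurface with parallel second fundamental form has only one nonzero principal curvature, by the Ricci identity $(\kappa_i-\kappa_j)\kappa_i\kappa_j=0$.
\end{enumerate}
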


In {\it higher codimension} $p>1$, the situation is much more subtle because of the presence of the normal curvature and of the fact that the second fundamental form is a normal bundle valued tensor and hence no obvious notion of convexity is available. In particular the sole assumption $|H|>0$ is not strong enough to guarantee a classification result. Indeed observe, for instance, that minimal submanifolds of any self-shrinking round sphere $\mathbb{S}^{n+k}(\sqrt{2(n+k)})$ with $k\leq p-1$ are self-shrinkers in $\mathbb{R}^{n+p}$ satisfying $|H|>0$. Hence, in order to obtain similar classification results, some extra assumption is needed (e.g. flat normal bundle, parallel principal normal, uniformly bounded geometry, low entropy), see for instance \cite{ArezzoSun, smoczyk_2005, coldingminicozzi_Par, Lee}. The first result of this paper is an improvement of a classification result in \cite{smoczyk_2005} substituting the uniformly bounded geometry condition appearing there with the properness of the immersion. More precisely, we prove the following: 

\begin{thm}\label{thm:extensionsmoczyck}
    Let $x:M^n\to \mathbb R^{n+p}$ ($n\geq 2$, $p\geq 1$) be a properly immersed self-shrinker without boundary. If $|H|> 0$ and $M$ has parallel principal normal $\nu\doteq H/|H|$, then it must belong to one of the following classes:
    \begin{equation*}
        M=\Gamma\times\mathbb R^{n-1},\qquad M=\widetilde M^r\times\mathbb R^{n-r},
    \end{equation*}
    where $\Gamma$ is an Abresch-Langer curve and $\widetilde M^r$ is a complete, minimal submanifold of the sphere $\mathbb{S}^{p+r-1}(\sqrt{2r})\subseteq \mathbb R^{p+r}$, where $0<r=\text{rank}(B_\nu)\leq n$ denotes the rank of the principal second fundamental form $B_{\nu}\doteq\langle B, \nu\rangle$.
\end{thm}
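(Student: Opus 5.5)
We follow Smoczyk's strategy \cite{smoczyk_2005}, but replace every use of bounded geometry by weighted analysis on properly immersed self-shrinkers. Set $f=|x|^2/4$ and work with the drift Laplacian $\mathcal L=\Delta-\tfrac12\langle x,\nabla\cdot\rangle$ and the Gaussian measure $e^{-f}\dvol$. A properly immersed self-shrinker has Euclidean (hence polynomial) volume growth (Cheng--Zhou, Ding--Xin), so $\int_M e^{-f}<\infty$. Two consequences will be used repeatedly. First, $(M,e^{-f}\dvol)$ is weighted parabolic: any function bounded above with $\mathcal L u\ge 0$ is constant. Second, integration by parts $\int u\,\mathcal L v\,e^{-f}=-\int\langle\nabla u,\nabla v\rangle e^{-f}$ holds for functions of at most polynomial growth, after inserting a cutoff and using the finite weighted volume.

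\textbf{Step 1: reduction to a scalar problem.} Since $\nu=H/|H|$ is parallel in the normal bundle, $\nabla^\perp H=\nabla|H|\otimes\nu$. Hence $h\doteq\langle B,\nu\rangle=B_\nu$ and $|H|=\tr h$ satisfy the codimension-one Simons-type identities
\[
\mathcal L|H|+|h|^2|H|-\tfrac12|H|=0,
\qquad
\mathcal L h+|h|^2h-\tfrac12 h=0 ,
\]
exactly as in Smoczyk's computation; the parallel frame kills the normal-connection terms, and Codazzi shows $h$ is a Codazzi tensor. With $|H|>0$, set $u=|h|^2/|H|^2$. The standard Huisken computation gives
\[
\mathcal L u+2\langle\nabla\log|H|^2,\nabla u\rangle=\frac{2}{|H|^4}\bigl|\,|H|\nabla h-\nabla|H|\otimes h\bigr|^2\ge0 .
\]
The natural plan is to show $u$ is constant. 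In Colding--Minicozzi's codimension-one argument this is done by testing with $\phi^2|H|^2 e^{-f}$ and using $\log|H|$ as a weight, since $|H|$ is a positive solution of $\mathcal L|H|+(|h|^2-\tfrac12)|H|=0$. Concretely, we write $|h|=u^{1/2}|H|$ and integrate $\operatorname{div}(|H|^2e^{-f}\nabla u)\ge$ (square term), with cutoffs. \textbf{This is the main obstacle.} Without bounds on $|B|$ we must control $\int|\nabla h|^2\phi^2e^{-f}$ and $\int|h|^2|\nabla\phi|^2e^{-f}$. We would adapt Colding--Minicozzi's Lemma on the $L^2$ integrability of $|A|$ and $|\nabla A|$ for $H>0$ shrinkers: multiply the equation for $\log|H|$ by $\phi^2$ to get $\int|h|^2\phi^2e^{-f}\le C\int(|\nabla\phi|^2+\phi^2)e^{-f}$, and iterate once more for $|\nabla h|$. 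Properness guarantees that all cutoff tails vanish. The conclusion is $|H|\nabla h=\nabla|H|\otimes h$, i.e.\ $h=|H|\,\Theta$ with $\Theta$ parallel.

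\textbf{Step 2: splitting.} Since $\Theta$ is parallel, its eigenspaces give a parallel orthogonal splitting $TM=E\oplus E^\perp$ with $\ker h=E^\perp$. Codazzi plus $h=|H|\Theta$ forces the nonzero eigenvalues to be equal, so $\Theta=\tfrac1r\pi_E$ on $E$ with $r=\text{rank}\,B_\nu$, or $\nabla|H|$ vanishes along the directions where this fails (the curve case). For $r=1$ the factor along $E$ is a planar curve in the plane spanned by the curve's tangent and $\nu$. The self-shrinker equation restricted to this curve gives the Abresch--Langer equation, and $M=\Gamma\times\mathbb R^{n-1}$. For $r\ge2$, Codazzi gives $\nabla|H|=0$, so $|H|$ is constant and $|h|^2=|H|^2/r$. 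The elliptic equation for $|H|$ then forces $|h|^2=\tfrac12$, whence $|H|^2=r/2$. Moreover $E^\perp$ is flat and totally geodesic in $\mathbb R^{n+p}$, since the shape operator vanishes there in every normal direction: this follows from $\mathcal L$ applied to $\langle B(e,e),\cdot\rangle$ for $e\in E^\perp$ together with parabolicity, in the spirit of Smoczyk's argument. By de Rham and the self-shrinker equation, $M=\widetilde M^r\times\mathbb R^{n-r}$. On $\widetilde M$ we have $|x|^2$ constant, because $\mathcal L|x|^2=2n-|x|^2$ restricted to $\widetilde M$ together with $x^\perp=-2H$ and $|H|^2=r/2$ gives $|x|^2=2r$. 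Finally, $H_{\widetilde M}=-\tfrac12x$ means $\widetilde M$ is minimal in $\mathbb S^{p+r-1}(\sqrt{2r})$.
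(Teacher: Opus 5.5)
Your Step~1 is sound and is essentially the paper's route. The paper bounds $\int_M|B_\nu|^2e^{-f}\,\dvol\le\frac12\Vol_f(M)$ by integrating $\diverg_f(\varphi^2\nabla\log|H|)$, exactly as you propose. It then applies an $L^2$-Liouville theorem to $|B_\nu|/|H|$, which is $\Delta_h$-subharmonic for $h=f-\log|H|^2$. Your scalar Simons identities need $[h^{n+1},h^\beta]=0$ and $\tr(h^{n+1}h^\beta)=0$; the paper obtains the latter from $LH=H$.

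Two corrections to Step~1:
\begin{itemize}
\item The drift term should be $\langle\nabla\log|H|^2,\nabla u\rangle$, not twice it. That is the version matching the weight $|H|^2e^{-f}$ you integrate against.
\item No integrability of $|\nabla B_\nu|$ is needed. With $Q$ your square term, $|H|^2|\nabla u|^2\le 2|B_\nu|^2Q$, so the cutoff term is absorbed and $\int\phi^2|H|^2Q\,e^{-f}\le 8\int|\nabla\phi|^2|B_\nu|^2e^{-f}$.
\end{itemize}

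The genuine gap is Step~2. Besides the integration by parts giving \eqref{first_smoczyk_condition}--\eqref{second_smoczyk_condition}, Smoczyk uses bounded geometry twice more, and you assert both conclusions without a valid argument.

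\emph{Case $\nabla^\perp H\equiv0$.} One must show $w\doteq|B|^2-|\hat P*B|^2\equiv0$ with $\hat P=2|H|h^{n+1}$, i.e.\ that $B_I$ vanishes on $\ker B_\nu$. Your appeal to ``$\mathcal L$ applied to $\langle B(e,e),\cdot\rangle$ together with parabolicity'' names no function that is bounded above and has a signed drift Laplacian. Neither $w$ nor $|B|$ is known to be bounded; that is exactly what bounded geometry supplied, so $f$-parabolicity cannot be invoked.

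\emph{Rank-one case.} You simply declare the curve planar, which presupposes $B_I\equiv0$. This is the other place where bounded geometry entered.

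The missing idea is a first-order ODE argument combined with properness. Smoczyk's identity $\langle\theta,\nabla w\rangle=-w$, with $\theta=\nabla f$, gives $w(\gamma(t))=w(p)e^{-t}$ along integral curves of $\theta$. Since $\frac{d}{dt}|x|^2(\gamma(t))=4|\theta|^2\ge0$, the backward orbit stays in $\{|x|^2\le|x(p)|^2\}$, which is compact by properness. Hence $w$ stays bounded as $t\to-\infty$, forcing $w(p)=0$. The rank-one case is the same with $\mathring\theta=\theta-\nabla\log|H|$ and $|B_I|^2(\gamma(t))=|B_I|^2(p)e^{-t}$. One first checks, via $\nabla_{\mathring\theta}\nabla|H|=0$, that the integral curve is defined for all $t$ and stays in a component of $\{\nabla|H|\neq0\}$.

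Finally, Codazzi does \emph{not} force the nonzero eigenvalues of $B_\nu$ to coincide. It only gives $\nabla|H|=0$ when $r\ge2$, and $\nabla|H|\in E$ when $r=1$. Equality of the eigenvalues, hence $|H|^2=r/2$, comes from differentiating $P(\theta,\cdot)=\frac12\,d|H|^2=0$. Using that $P=\langle H,B\rangle$ is parallel and $\nabla\theta=\frac12 g-P$, this yields $P^2=\frac12P$.
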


Even though in higher codimension there is no obvious notion of convexity, 
a natural substitute is provided by the so-called 
\emph{quadratic pinching condition}. 
We say that a submanifold is 
$c_n$-quadratically pinched if it satisfies the pointwise inequality
\[
|B|^2 \leq c_n |H|^2.
\]
In codimension $p=1$, suitable choices of the pinching constant $c_n$ are 
closely related to convexity-type assumptions. 
More precisely, it is well known that a hypersurface which is uniformly
$\frac{1}{n-k}$-quadratically pinched, for some $1 \leq k \leq n-1$, 
is in fact uniformly $k$-convex; see \cite[Lemma 5.1]{HuiskenSinestrari}.

Besides its relation to convexity, a further crucial aspect in the choice of the 
pinching constant in our setting is whether the condition is preserved along mean curvature flow. Our specific choice of the pinching constant is motivated by the work of 
B.~Andrews and C.~Baker, who proved in Theorem $2$ of \cite{andrews_baker_2010} 
that the $c_{n}$-quadratic pinching is preserved along the mean curvature flow 
provided that $c_n < \frac{4}{3n}$. Moreover, the quadratic pinching condition has proved to be a powerful structural assumption, leading to classification results for ancient solutions, convexity estimates in higher codimension, and even the development of mean curvature flow with surgery and topological classification results (see e.g. \cite{Naff}, \cite{LN1}, \cite{LN2}, \cite{LN3}).

Focusing on the borderline case 
\[
c_n= \frac{4}{3n},
\]
in the main result of this paper, we classify connected, properly immersed, and $c_{n}$-pinched self-shrinkers in higher codimension. Precisely, we will prove the following: 

\begin{thm}\label{main_theorem}
    Let $x:M^n\to \mathbb R^{n+p}$ ($n\geq 2$, $p\geq 1$) be a properly immersed self-shrinker without boundary. If $M$ satisfies a $\frac{4}{3n}$-quadratic pinching condition then either $H\equiv 0$ or $|H|>0$. In the first case $M$ is isometric to an $n$-dimensional plane. In the second case the principal normal $\nu\doteq H/|H|$ is parallel in the normal bundle, $M$ has effective codimension one and it is isometric to a generalized cylinder $S^k(\sqrt{2k})\times\mathbb R^{n-k}$, with $\left\lceil\frac{3n}{4}\right\rceil\leq k\leq n$. 
\end{thm}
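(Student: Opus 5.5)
We aim to reduce to Theorem \ref{thm:extensionsmoczyck} by showing that $\nu = H/|H|$ is parallel, using elliptic arguments with the drift Laplacian $\mathcal{L} = \Delta - \frac12\langle x,\nabla\cdot\rangle$. On a properly immersed self-shrinker this operator is symmetric with respect to the weight $e^{-|x|^2/4}\,\dvol$. Proper immersion gives Euclidean volume growth, hence finite weighted volume, so $(M,\mathcal L)$ is weighted parabolic. We will use this in the following form: a function $u$ bounded from above and satisfying $\mathcal L u\ge 0$ is constant. More generally, we will use integration by parts in weighted $L^2$ with cutoff functions, applied to quantities of at most polynomial growth.

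\emph{Dichotomy.} If $H\equiv 0$ at some point $q$, the pinching $|B|^2\le \frac{4}{3n}|H|^2$ forces $B(q)=0$. To turn this pointwise statement into a global one, we use the standard Simons-type identities for self-shrinkers,
\[
\mathcal L H = -\tfrac12 H + \text{(terms of the form } \langle B,H\rangle B\text{)},\qquad \mathcal L|H|^2 \ge 2|\nabla^\perp H|^2 - |H|^2 + 2\sum_{i,j}\langle B_{ij},H\rangle^2 .
\]
Combining the pinching with a Kato-type inequality, the zero set of $|H|$ should satisfy a strong maximum principle, so it is either all of $M$ or empty. If $H\equiv 0$, pinching gives $B\equiv 0$, so $M$ is a plane; it passes through the origin because $x^\perp=0$.

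\emph{Main step.} Assume $|H|>0$. We consider $f = |B|^2/|H|^2$ and aim for an inequality
\[
\mathcal L f + 2\left\langle \nabla\log|H|^2,\nabla f\right\rangle \ge \frac{C}{|H|^2}\Bigl(|\nabla^\perp\nu|^2|H|^2 + \dots\Bigr)\ge 0,
\]
valid under $f\le \frac{4}{3n}$. This is the elliptic analogue of the Andrews--Baker computation. The idea is that the reaction terms
\[
\sum_{\alpha,\beta}\Bigl(\langle B_\alpha,B_\beta\rangle^2 + |[B_\alpha,B_\beta]|^2\Bigr) - f\sum_{i,j}\langle B_{ij},H\rangle^2
\]
are nonnegative when $f\le\frac{4}{3n}$. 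This uses the splitting into the $\nu$-direction and its orthogonal complement, i.e. the $\sum_{\alpha=n+2}$ sums. The gradient terms, in turn, are controlled via the sharp inequality $|\nabla B|^2\ge \frac{3}{n+2}|\nabla H|^2$.

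The new operator is a weighted Laplacian with density $|H|^4 e^{-|x|^2/4}$. We will verify its parabolicity, or else run a cutoff argument directly, exploiting that $f$ is bounded. Parabolicity then makes $f$ constant, so every nonnegative term in the inequality vanishes. In particular $\nabla^\perp\nu=0$, and in addition equality holds in the algebraic pinching terms. I expect this to be the main obstacle: getting the right-hand side nonnegative at exactly the borderline constant $\frac{4}{3n}$ with no uniform gap, and justifying the parabolicity for the modified weight, where $|H|$ may decay.

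\emph{Conclusion.} Once $\nu$ is parallel, Theorem \ref{thm:extensionsmoczyck} applies, so $M=\Gamma\times\mathbb R^{n-1}$ or $M=\widetilde M^r\times\mathbb R^{n-r}$. For $\Gamma\times\mathbb R^{n-1}$ we have $|B|^2=|H|^2$, which violates the pinching since $n\ge2$. In the second case we use the equality information from the main step, namely that the components of $B$ orthogonal to $\nu$ vanish. This makes $\widetilde M^r$ totally geodesic in the sphere, so $M=S^r(\sqrt{2r})\times\mathbb R^{n-r}$. For this cylinder $|B|^2/|H|^2 = 1/r$, and the pinching $1/r\le \frac{4}{3n}$ gives $r\ge \left\lceil\frac{3n}{4}\right\rceil$.
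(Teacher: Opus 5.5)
There are genuine gaps here, in the dichotomy step and in the key algebraic estimate of the main step.

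\textbf{The dichotomy step.} It does not work as you propose. The correct identity is $\Delta_f|H|^2=2|\nabla^\perp H|^2+|H|^2-2|S_H|^2$. Your signs on the last two terms are reversed, as is the $-\frac12 H$ in $\Delta_f^\perp H$, which should be $+\frac12 H$. In any case the gradient term enters with a plus sign. So Kato's inequality and the pinching can only give \emph{lower} bounds on $\Delta_f|H|$ or $\Delta_f|H|^2$, i.e. subsolution inequalities. A nonnegative subsolution can vanish at a point without vanishing identically (think of $x_1^2$). A strong minimum principle needs a supersolution inequality $\Delta_f u\le Cu$, and nothing built from $|H|$ alone controls $|\nabla^\perp H|^2$ from above.

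The missing idea is to use $\omega=\frac{4}{3n}|H|^2-|B|^2\ge 0$. The term $-2|\nabla B|^2$ from the Simons identity for $|B|^2$ absorbs $\frac{8}{3n}|\nabla^\perp H|^2$, by Huisken's estimate $|\nabla B|^2\ge\frac{3}{n+2}|\nabla^\perp H|^2$ and $\frac{4}{3n}<\frac{3}{n+2}$. The reaction terms then give $\Delta_f\omega\le\omega$ on all of $M$. At zeros of $H$ one has $B=0$ and uses $\Delta_f|B|^2\ge 2|\nabla B|^2+|B|^2(1-3|B|^2)$; elsewhere one uses the refined estimate discussed below. The strong maximum principle then gives two cases. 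Either $\omega\equiv 0$, whence $\nabla^\perp H\equiv 0$ and $|H|$ is constant. Or $\omega>0$, whence $H$ never vanishes, because $H(q)=0$ forces $\omega(q)=0$.

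\textbf{The main step.} It has the paper's architecture: the ratio $|B|^2/|H|^2$, the drift $2\langle\nabla\log|H|^2,\nabla\cdot\rangle$, the density $|H|^4e^{-|x|^2/4}$, Huisken's gradient estimate, and parabolicity. Ending through Theorem~\ref{thm:extensionsmoczyck}, with ``$B_I\equiv 0$ makes $\widetilde M^r$ a great sphere'', is a legitimate alternative to the paper's Erbacher reduction plus Colding--Minicozzi. But the heart of the proof, the reaction estimate, is left open. Because of the sign error, the inequality you set out to prove points the wrong way.

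With $\Delta_f|B|^2=2|\nabla B|^2+|B|^2-2\sum_{\alpha,\beta}\bigl(|[h^\alpha,h^\beta]|^2+[\tr(h^\alpha h^\beta)]^2\bigr)$, the linear terms cancel in the ratio. What is needed is $\sum_{\alpha,\beta}\bigl(|[h^\alpha,h^\beta]|^2+[\tr(h^\alpha h^\beta)]^2\bigr)\le |B|^2|B_\nu|^2$. Your ``$\ge$'' version is an equality in codimension one and is false under the pinching as soon as $B_I\neq 0$.

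The paper proves the sharper bound $\Delta_f|B|^2\ge 2|\nabla B|^2+|B|^2(1-2|B_\nu|^2)+3|B_I|^4$ by splitting off $\nu$:
\begin{enumerate}
\item the Li--Li inequality on the $\nu^\perp$ block gives $-3|B_I|^4$;
\item Cheng's lemma and an elementary bound give $-4|\mathring B_\nu|^2|B_I|^2$ for each of the two mixed groups;
\item the pinching, in the form $\frac{8}{n}|H|^2\ge 6|B|^2$, closes the estimate exactly at $\frac{4}{3n}$, which is why no uniform gap is needed.
\end{enumerate}
The resulting $3|B_I|^4/|H|^2$ term is also what actually yields $B_I\equiv 0$, which you only assert as ``equality information''.

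Finally, your parabolicity worry is misplaced: decay of $|H|$ only helps. The new weighted volume is finite because $|H|=\frac12|x^\perp|\le\frac12|x|$ and $M$ has Euclidean volume growth, so Grigor'yan's criterion applies.
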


The above result can be viewed as an improvement of Corollary 1.7 in \cite{leenaffzhu}. In \cite{leenaffzhu}, the classification is obtained  through a parabolic analysis of ancient solutions to the mean curvature flow and requires a \textit{uniform} quadratic pinching condition with dimension-dependent constant
\[
c_n=\begin{cases}
    \frac{3(n+1)}{2n(n+2)} & \textrm{if } n\leq 7,\\
    \frac{4}{3n} & \textrm{if } n\geq 8.
\end{cases}\]
Our approach is conceptually different. Rather than relying on the parabolic analysis of ancient solutions, we work directly at the level of self-shrinkers, from an elliptic point of view. This allows us to treat the borderline Andrews-Baker constant $\frac{4}{3n}$ in every dimension, while also removing the assumption of uniform pinching. In particular, the codimension reduction phenomenon appears here as an intrinsic rigidity property of self-shrinkers at the critical quadratic pinching constant.

\begin{rem}
As it will be clear from the proofs, the assumption on the properness of the immersion, via \cite{DX}, \cite{CZ}, is used repeatedly to ensure the convergence of various weighted integrals, thereby justifying integration by parts. As a matter of fact, Theorem \ref{main_theorem} holds substituting the properness assumption with completeness and a weighted $L^2$-condition on the norm of the second fundamental form (see e.g. \cite{Rimoldi_Class}). 
\end{rem}

The paper is organized as follows. In Section \ref{section_equations_and_setting}, we recall basic definitions and equations for submanifolds, fix the notation, and recall some basic facts about weighted manifolds that will be used throughout the paper. Section \ref{section_simons_equations} is devoted to the derivation of Simons-type equations for the squared norms of the mean curvature vector field and the second fundamental form. The final two sections contain, respectively, the proofs of Theorem \ref{thm:extensionsmoczyck} and Theorem \ref{main_theorem}.

\section{Geometric setup and notation}\label{section_equations_and_setting}

In this section, we recall some basic definitions and equations for submanifolds and establish the notation used through the paper.

Let $x:M^n\to (\mathbb R^{n+p},\langle\cdot,\cdot\rangle)$ be a smooth immersion of an $n$-dimensional manifold $M^n$ into the $(n+p)$-dimensional Euclidean space. The immersion $x$ induces a metric $g$ on $M$ via the pullback. We denote by $\overline\nabla$ and $\nabla$ the Levi-Civita connections on $\mathbb R^{n+p}$ and $M$, respectively; by $TM$ and $NM$ the tangent and normal bundles of $M$ in $\mathbb R^{n+p}$; and by $\Gamma(TM)$ and $\Gamma(NM)$ their spaces of smooth sections. 
The $(1,3)$-curvature tensor $R$ of $(M, \nabla)$ is defined with the sign convention 
\begin{equation*}
    R(X,Y)Z\doteq \nabla_X(\nabla_YZ)-\nabla_Y(\nabla_XZ)-\nabla_{[X,Y]}Z.
\end{equation*}
The associated $(0,4)$-curvature tensor is given by $R(X,Y,Z,W)\doteq \langle R(X,Y)W,Z\rangle$. Unless otherwise stated, $\Delta$ and $\diverg$ denote the Laplacian and the divergence on the submanifold $M$, respectively. 

We will make use of the following conventions on the range of indices:
\begin{equation*}
    1\leq i,j,k,\ldots\leq n,\qquad n+1\leq\alpha,\beta,\gamma,\ldots\leq n+p.
\end{equation*}
Let $\{e_i\}_{i=1}^n$ and $\{\nu_\alpha\}_{\alpha=n+1}^{n+p}$ represent local orthonormal frames for the tangent and normal bundles, respectively. If $H$ is nowhere vanishing, we set $\nu_{n+1}$ to be the principal normal $H/|H|$, denoted simply as $\nu$ when there is no risk of confusion. Unless otherwise specified, sums over Latin indices range from $1$ to $n$, while sums over Greek indices range from $n+1$ to $n+p$.

\subsection{The second fundamental form}
The {\it second fundamental form} $B$  of $M$ corresponds to the normal part of the ambient covariant derivative:
\begin{equation*}
    B:\Gamma(TM)\times \Gamma(TM)\longrightarrow \Gamma(NM),\qquad B(X,Y)\doteq (\overline\nabla_XY)^\perp=\overline\nabla_XY-\nabla_XY.
\end{equation*}
In terms of local coordinates on $M$, $B$ is expressed as 
\begin{equation*}
    B=\sum_\alpha h^\alpha\otimes\nu_\alpha,
\end{equation*}
where the $h^\alpha$'s are symmetric $2$-tensor with components 
\begin{equation*}
    h_{ij}^\alpha=\langle \overline\nabla_{e_i}e_j, \nu_\alpha \rangle.
\end{equation*}

The {\it mean curvature vector} $H$ of $M$ is the trace of the second fundamental form $B$. 
Under the assumption of nowhere vanishing mean curvature vector, the tensors $h^\alpha$ satisfy $\tr\: h^{n+1}=|H|$ and $\tr\: h^{\alpha}=0$ for $\alpha\geq n+2$. In this setting, the {\it tracefree second fundamental form}, defined by $\mathring B\doteq B-\frac{1}{n}g\otimes H$, can be written component-wise as $\mathring B=\sum_\alpha\mathring h^\alpha\otimes\nu_\alpha$. Here, the coefficients are $\mathring h^{n+1}=h^{n+1}-\frac{|H|}{n}\operatorname{Id}$ and $\mathring h^\alpha=h^\alpha$ for $\alpha\geq n+2$. 
Defining:
\begin{equation*}
    B_\nu\doteq h^{n+1}\otimes\nu,\qquad \mathring B_\nu\doteq \mathring h^{n+1}\otimes\nu,\qquad B_I\doteq \sumat h^\alpha\otimes\nu_\alpha,
\end{equation*}
it is clear that the following decompositions hold
\begin{equation*}
    |B_I|^2=\sumat |h^\alpha|^2=|B|^2-|B_\nu|^2,\qquad |\mathring B_\nu|^2=|B_\nu|^2-\frac{|H|^2}{n}.
\end{equation*}

Alternatively, we can describe the second fundamental form through the connection $1$-forms (see e.g. \cite{AliasMastroliaRigoli}). Let $\{\omega^A\}_{A=1}^{n+p}$ be the co-frame dual to $\{e_1,\ldots,e_n,\nu_{n+1},\ldots,\nu_{n+p}\}$ and $\{\omega^{A}_{B}\}_{A,B=1}^{n+p}$ the connection $1$-forms of $\mathbb R^{n+p}$. The latter are described by exterior differentiation of the $\omega^A$'s , and are uniquely defined by Cartan's first structure equations: $d\omega^A=\sum_B\omega^B\wedge \omega^{A}_{B}$ and $ \omega^{A}_{B}+\omega^{B}_{A}=0$. Restricting to $M$, there exist smooth functions $h_{ij}^\alpha$ such that $\omega^{\alpha}_{i}=\sum_jh_{ij}^\alpha\:\omega^j$, with $h_{ij}^\alpha=h_{ji}^\alpha$.
These are precisely the components of the second fundamental form:
\begin{equation*}
    B=\sum_{i,j,\alpha}h^\alpha_{ij}\:\omega^i\otimes\omega^j\otimes\nu_\alpha=\sum_\alpha h^\alpha\otimes\nu_\alpha.
\end{equation*} 
The first and second covariant derivatives of $h^\alpha_{ij}$ are defined, respectively, by:
\begin{align}
    \sum_k h^\alpha_{ijk}\:\omega^k&\doteq dh^\alpha_{ij}-\sum_k h^\alpha_{ik}\:\omega^{k}_{j}-\sum_k h^\alpha_{kj}\:\omega^{k}_{i}+\sum_\beta h^\beta_{ij}\:\omega^{\alpha}_\beta,\label{first_sff_covariant_derivative_connection_forms}
    \\\sum_l h^\alpha_{ijkl}\:\omega^l&\doteq dh^\alpha_{ijk}-\sum_l h^\alpha_{ijl}\:\omega^{l}_{k}-\sum_l h^\alpha_{ilk}\:\omega^{l}_{j}-\sum_l h^\alpha_{ljk}\:\omega^{l}_{i}+\sum_\beta h^\beta_{ijk}\:\omega^{\alpha}_{\beta}.\nonumber
\end{align}
The latter obey the following commutation rule:
\begin{equation}\label{second_covariant_derivatives_sff_commutation_rule}
    h^\alpha_{ijkl}=h^\alpha_{ijlk}+\sum_m h^\alpha_{mj}R_{mikl}+\sum_m h^\alpha_{im}R_{mjkl}+ \sum_\beta h^\beta_{ij}R_{\beta\alpha kl}.    
\end{equation}  

\subsection{Fundamental equations}
The intrinsic and extrinsic geometry of the submanifold are related by the following equations (see e.g. \cite{do_carmo_1992_riemannian_geometry}, chapter $6$): 
\begin{itemize}
    \item {\it Gauss' equation}: 
        for $X,Y,Z,W\in\Gamma(TM)$,
        \begin{equation*}\label{gauss_equation_general}
            R(X,Y,Z,W)=\langle B(X,Z),B(Y,W)\rangle-\langle B(X,W),B(Y,Z)\rangle.
        \end{equation*} 
        Expressing this equation in local coordinates, we obtain
        \begin{equation*}\label{gauss_equation}
            R_{ijkl}=\sum_\beta h^\beta_{ik}h^\beta_{jl}-\sum_\beta h^\beta_{il}h^\beta_{jk}.
        \end{equation*}
        
    \item {\it Weingarten's equation}:   
        for $X,Y\in\Gamma(TM)$ and $\xi\in\Gamma(NM)$,
        \begin{equation*}\label{weingarten_equation}
            \overline\nabla_X\xi=-S_\xi(X)+\nabla^\perp_X\xi,
        \end{equation*}
        where $\nabla^\perp$ is the connection on the normal bundle $NM$ and $S_\xi$ is the {\it shape operator} associated to the normal direction $\xi$, defined by
        \begin{equation*}\label{shape_operator_definition}
            S_\xi:\Gamma(TM)\longrightarrow \Gamma(TM),\qquad\langle S_\xi(X),Y\rangle=\langle B(X,Y),\xi\rangle.
        \end{equation*}
        \begin{rem}\label{trace_norm_SH_remark}
            Let us restrict our attention to the case $\xi=H$. A direct computation shows that the trace of $S_H$ yields the squared norm of the mean curvature vector:
            \begin{equation*}
                \tr\:S_H=\sum_i\langle S_H(e_i),e_i\rangle=\sum_i\langle B(e_i,e_i),H\rangle=|H|^2.
            \end{equation*}
            Furthermore, assuming $H$ is nowhere vanishing, the squared norm $|S_H|^2$ relates to the norm of the second fundamental form in the direction of $H$ by
            \begin{equation*}
                |S_H|^2=\sum_i\langle S_H(e_i),S_H(e_i)\rangle=\sum_{i,j}\langle B(e_i,e_j),H\rangle^2=|H|^2 |B_\nu|^2.
            \end{equation*}
        \end{rem}
        
    \item {\it Codazzi's equation}: 
        for $X,Y,Z\in\Gamma(TM)$ and $\xi\in\Gamma(NM)$,
        \begin{equation*}\label{codazzi_equation_general}
            \langle(\nabla_XB)(Y,Z),\xi\rangle=\langle(\nabla_YB)(X,Z),\xi\rangle,
        \end{equation*}
        where the covariant derivative of $B$ is defined by
        \begin{equation*}\label{sff_covariant_derivative_definition}
            (\nabla_XB)(Y,Z)\doteq \nabla^\perp_X\bigl(B(Y,Z)\bigr)-B(\nabla_XY,Z)-B(Y,\nabla_XZ).
        \end{equation*}
        In local coordinates, the previous equation becomes
        \begin{equation*}
            h_{ijk}^\alpha=h_{ikj}^\alpha.
        \end{equation*}
        
        \begin{rem}
            Codazzi's equation can be restated in terms of covariant derivatives acting on the shape operator. Using the definition of the covariant derivative, the definition of the shape operator, and metric compatibility, we have
            \begin{equation*}
                \langle(\nabla_XB)(Y,Z),\xi\rangle =\langle(\nabla_XS_\xi)Y,Z\rangle-\langle B(Y,Z),\nabla^\perp_X\xi\rangle.
            \end{equation*}
            Consequently, Codazzi's equation can be rewritten as
            \begin{equation}\label{codazzi_equation_shape_operator}
                \langle(\nabla_XS_\xi)Y,Z\rangle=\langle(\nabla_YS_\xi)X,Z\rangle+\langle B(Y,Z),\nabla^\perp_X\xi\rangle-\langle B(X,Z),\nabla^\perp_Y\xi\rangle.
            \end{equation}
            The last two terms in \eqref{codazzi_equation_shape_operator} vanish if $\xi$ is parallel in the normal bundle. In codimension one, the normal bundle is a trivial line bundle and such parallel sections are readily available. On the other hand, for submanifolds of higher codimension, parallel normal fields are not always available and typically obstructed by the curvature and holonomy of the normal connection.
        \end{rem}
        
    \item {\it Ricci's equation}: 
        for $X,Y\in\Gamma(TM)$ and $\eta,\xi\in\Gamma(NM)$,
        \begin{equation*}\label{ricci_equation_general}
            R^\perp(X,Y,\eta,\xi)=\langle[S_\xi,S_\eta]X,
            Y\rangle,
        \end{equation*} 
        where $R^\perp$ is the curvature tensor of the normal connection and $[S_\xi,S_\eta]$ is the commutator. In local coordinates, the previous equation reads as
        \begin{equation*}\label{ricci_equation}
            R^\perp_{ij\alpha\beta }=\sum_kh^\alpha_{ik}h^\beta_{kj}-\sum_k h^\alpha_{jk}h^\beta_{ki}.
        \end{equation*}
\end{itemize}

Combining Codazzi's equation with the commutation rule \eqref{second_covariant_derivatives_sff_commutation_rule} yields that the rough Laplacian induced by the normal connection of the second fundamental form is given by
\begin{equation}\label{sff_laplacian_1}
    \Delta^\perp h^\alpha_{ij}\doteq \sum_k h^\alpha_{ijkk}=\nabla^\perp_j\nabla^\perp_i H^\alpha+\sum_{k,m}h_{mk}^\alpha R_{mijk}+\sum_{k,m}h^\alpha_{im}R_{mkjk}+\sum_{k,\beta} h^\beta_{ik}R_{\beta\alpha jk}.
\end{equation}

\subsection{Weighted manifolds, self-shrinkers, and weighted parabolicity} 
A weighted manifold $N_h$ is a Riemannian manifold $(N^m,\langle\cdot,\cdot\rangle)$ equipped with the measure $\dvol_h\doteq e^{-h}\dvol_N$, where $h\in C^\infty(N)$. The weighted Laplacian (or $h$-Laplacian) $\Delta_h$ is defined by
\begin{equation*}
    \Delta_hu\doteq \diverg_h(\nabla u)=\Delta u-\langle\nabla h,\nabla u\rangle,
\end{equation*}
where $\diverg_hX\doteq e^h\diverg(e^{-h}X)$. The operator $\Delta_h$ is linear, elliptic, and symmetric on $L^{2}(N_h)$. The associated weighted volume (or $h$-volume) is
\begin{equation*}
    \Vol_h(N)\doteq \int_N\dvol_h=\int_N e^{-h}\dvol_N.
\end{equation*}
   A fundamental example is the Gaussian space, given by 
   \[
   G_{f}^{m}\doteq(\mathbb{R}^m, \langle\cdot,\cdot\rangle, e^{-|x|^2/4}dx),
   \]
   for which the weighted Laplacian is nothing but the Ornstein-Uhlenbeck operator
   \[
   \Delta_{f}u=\Delta u-\frac{1}{2}\langle x, \nabla u\rangle.
   \]
Let $x:M^{n}\to N_{h}^{n+p}$ be an isometric immersion. The weighted mean curvature vector is
\[
\ H_{h}\doteq H+(\nabla h)^{\bot}.
\]
The immersion is $h$-minimal if $H_{h}\equiv 0$. As a consequence of \eqref{self_shrinker_equation}, self-shrinkers of the mean curvature flow are exactly the $f$-minimal immersions into $G_{f}^{n+p}$. 

Note that such an immersion induces a weighted structure $M_f^{n}$, where $f=\frac{|x|^{2}}{4}$. We recall that by \cite{DX, CZ} the following properties are equivalent: properness of the immersion, extrinsic polynomial (indeed Euclidean) volume growth, and finiteness of the weighted volume, $\mathrm{Vol}_f(M)<+\infty$. In particular, as a consequence of a well-known result due to \cite{grigoryan_2006_weighted_manifolds}, the finiteness of the weighted volume implies that the complete weighted manifold $M_f^n$ is $f$-parabolic, i.e. any
$u \in C^0(M) \cap W^{1,2}_{\mathrm{loc}}(M)$ with
$\Delta_f u \ge 0$ and $\sup_M u < +\infty$ is constant.

\section{\texorpdfstring{Simons' equations}{Simons' equations}}\label{section_simons_equations}

\subsection{\texorpdfstring{Simons' equation for $|H|^2$}{Simons' equation for |H|2}}

Throughout the remainder of the paper, let $f$ be the function $f(x)=|x|^2/4$. Covariantly differentiating the self-shrinker equation \eqref{self_shrinker_equation} in the ambient space yields
\begin{equation}\label{ambient_H_covariant_derivative}
    \overline\nabla_{e_i}H=-\frac12 e_i+\nabla_{e_i}(\nabla f)+B(\nabla f,e_i),
\end{equation}
which implies that
\begin{equation}\label{normal_gradient_H}
    \nabla^\perp H=B(\nabla f,\cdot)
\end{equation}
and 
\begin{equation*}
    e_i|H|^2=2\langle\nabla^\perp_{e_i} H,H\rangle=2\langle S_H(\nabla f),e_i\rangle.
\end{equation*}
Consequently,
\begin{equation}\label{gradient_squared_norm_H}
    \nabla|H|^2=2S_H(\nabla f),
\end{equation}
from which it follows that
\begin{equation}\label{laplacian_squared_norm_H_1}
    \Delta|H|^2\doteq \sum_i\langle\nabla_{e_i}(\nabla|H|^2),e_i\rangle =2\sum_i\left[\langle\left(\nabla_{e_i}S_H\right)(\nabla f),e_i\rangle+\langle S_H\left(\nabla_{e_i}(\nabla f)\right),e_i\rangle\right].
\end{equation}
Using Weingarten's equation and Equation \eqref{ambient_H_covariant_derivative}, we can expand the term $\nabla_{e_i}(\nabla f)$ as
\begin{equation*}\label{gradient_nabla_f}
    \nabla_{e_i}(\nabla f) =\frac12e_i+\left(\overline\nabla_{e_i}H\right)^\top=\frac12e_i-S_H(e_i).
\end{equation*}
Substituting this back into \eqref{laplacian_squared_norm_H_1} and recalling Remark \ref{trace_norm_SH_remark}, together with \eqref{codazzi_equation_shape_operator}, we find
\begin{equation}\label{laplacian_squared_norm_H_2}
    \begin{aligned}
        \phantomsection 
        \Delta |H|^2 &= 2\sum_i\langle\left(\nabla_{e_i}S_H\right)(\nabla f),e_i\rangle+|H|^2-2|S_H|^2 
        \\&= 2\sum_i\langle\left(\nabla_{\nabla f}S_H\right)e_i,e_i\rangle+2\sum_i\left[\langle B(\nabla f,e_i),\nabla^\perp_{e_i}H\rangle-\langle B(e_i,e_i),\nabla^\perp_{\nabla f}H\rangle\right]
        \\&\qquad+|H|^2-2|S_H|^2.
    \end{aligned}
\end{equation}
A straightforward computation proves that 
\begin{equation}\label{laplacian_squared_norm_H_2_1}
    2\sum_i\langle\left(\nabla_{\nabla f}S_H\right)e_i,e_i\rangle=2\langle\nabla f,\nabla |H|^2\rangle,
\end{equation}
while identities \eqref{normal_gradient_H} and \eqref{gradient_squared_norm_H} imply that: 
\begin{align}
    2\sum_i\langle B(\nabla f,e_i),\nabla^\perp_{e_i}H\rangle&=2|\nabla^\perp H|^2,\label{laplacian_squared_norm_H_2_2}
    \\2\sum_i\langle B(e_i,e_i),\nabla^\perp_{\nabla f}H\rangle&=\langle\nabla f,\nabla |H|^2\rangle.\label{laplacian_squared_norm_H_2_3}
\end{align}

Combining \eqref{laplacian_squared_norm_H_2_1}--\eqref{laplacian_squared_norm_H_2_3}, from \eqref{laplacian_squared_norm_H_2} we derive Simons' equation for $|H|^2$:
\begin{equation}\label{f_laplacian_squared_norm_H_generic}
    \Delta_f|H|^2=2|\nabla^\perp H|^2+|H|^2-2|S_H|^2.
\end{equation}
It is worth noting that \eqref{f_laplacian_squared_norm_H_generic} remains valid regardless of whether $H$ vanishes. In particular, if $p_0\in M$ is such that $|H|(p_0)=0$ then
\begin{equation}\label{f_laplacian_squared_norm_H_generic_application}
    \Delta_f|H|^2(p_0)=2|\nabla^\perp H|^2(p_0).
\end{equation}

If we assume $H$ is nowhere vanishing, Remark \ref{trace_norm_SH_remark} allows us to rewrite equation \eqref{f_laplacian_squared_norm_H_generic} as
\begin{equation}\label{f_laplacian_squared_norm_H_nonvanishing}
    \Delta_f|H|^2=2|\nabla^\perp H|^2+|H|^2(1-2\left|B_\nu\right|^2).
\end{equation}

\subsection{\texorpdfstring{Simons' equation for $|H|$ with parallel principal normal}{Simons' equation for |H| with parallel principal normal}}

Assume $H$ is nowhere vanishing and the principal normal $\nu\doteq H/|H|$ is parallel in the normal bundle (i.e., $\nabla^\perp\nu=0$). This allows us to simplify the expression for $\Delta_f|H|$. Indeed, observe that the parallel principal normal condition is equivalent to
\begin{equation}\label{equivalence_gradient_H_and_norm_H_parallel_principal_normal}
    \nabla^\perp H=(\nabla|H|)\nu.
\end{equation}
Substituting \eqref{equivalence_gradient_H_and_norm_H_parallel_principal_normal} into \eqref{f_laplacian_squared_norm_H_nonvanishing} yields the following expression:
\begin{equation}\label{f_laplacian_norm_H_parallel_principal_normal}
    \Delta_f|H|=|H|\left(\frac12-|B_\nu|^2\right).
\end{equation}

\subsection{\texorpdfstring{Simons' equation for $|B|^2$}{Simons' equation for |B|2}}

Covariantly differentiating equation \eqref{normal_gradient_H} and using the self-shrinker equation \eqref{self_shrinker_equation} we get
\begin{equation}\label{normal_second_covariant_derivatives_H}
    \nabla^\perp_j\nabla^\perp_iH^\alpha=\frac12\nabla^\perp_j\left(\sum_k\langle x,e_k\rangle h^\alpha_{ik}\right)=\frac{1}{2}h^\alpha_{ij}-\sum_k\langle H,h_{jk}\rangle h^\alpha_{ik}+\frac{1}{2}\sum_k\langle x,e_k\rangle h^\alpha_{ijk}.
\end{equation}
Combining \eqref{sff_laplacian_1} and \eqref{normal_second_covariant_derivatives_H}, we obtain
\begin{equation}\label{intermediate_computation_1}
    \begin{aligned}
        \phantomsection
        \sum_{i,j,\alpha}h^{\alpha}_{ij}\Delta^\perp h^{\alpha}_{ij}&=\frac{|B|^2}{2}-\sum_{i,j,k,\alpha}\langle H,h_{jk}\rangle h_{ij}^\alpha h_{ik}^\alpha+\frac14\langle x,\nabla|B|^2\rangle
        \\&\qquad+\!\!\sum_{i,j,k,m,\alpha}h^{\alpha}_{ij}h^{\alpha}_{mk}R_{mijk}+\!\!\sum_{i,j,k,m,\alpha}h^{\alpha}_{ij}h^{\alpha}_{im}R_{mkjk}+\!\!\sum_{i,j,k,\alpha,\beta}h^{\alpha}_{ij}h^\beta_{ik}R_{\beta\alpha jk}.
    \end{aligned}
\end{equation}
Gauss and Ricci equations imply
\begin{equation}\label{intermediate_computation_2}
    \begin{aligned}
        \phantomsection
        \sum_{i,j,k,m,\alpha}h^{\alpha}_{ij}h^{\alpha}_{mk}R_{mijk}&=\!\!\sum_{i,j,k,m,\alpha,\beta}h^{\alpha}_{ij}h^{\alpha}_{mk}\left(h^\beta_{mj}h^\beta_{ik}-h^\beta_{mk}h^\beta_{ij}\right)
        \\&=\sum_{j,k,\alpha,\beta}(h^{\alpha}h^\beta)_{kj}(h^{\alpha}h^\beta)_{jk}-\sum_{i,k,\alpha,\beta}(h^{\alpha}h^\beta)_{ii}(h^{\alpha}h^\beta)_{kk}
        \\&=\sum_{\alpha,\beta}\tr(h^{\alpha}h^\beta)^2-\sum_{\alpha,\beta}[\tr(h^{\alpha}h^\beta)]^2.
    \end{aligned}
\end{equation}
Similarly it can be proved that:
\begin{align}
    \sum_{i,j,k,m,\alpha}h^{\alpha}_{ij}h^{\alpha}_{im}R_{mkjk}&=\sum_{i,j,m,\alpha}\langle H,h_{mj}\rangle h_{ij}^\alpha h_{im}^\alpha-\sum_{\alpha,\beta}\tr[(h^{\alpha})^2(h^\beta)^2],\label{intermediate_computation_3}
    \\ \sum_{i,j,k,\alpha,\beta}h^{\alpha}_{ij}h^\beta_{ik}R_{\beta\alpha jk}&=\sum_{\alpha,\beta}\tr(h^{\alpha}h^\beta)^2-\sum_{\alpha,\beta}\tr[(h^{\alpha})^2(h^\beta)^2].\label{intermediate_computation_4}
\end{align}

Recalling that for any matrix $A\in\mathbb R^{p,q}$ its norm is defined as $|A|^2=\tr(AA^T)=\sum_{i,j}a_{ij}^2$, and combining \eqref{intermediate_computation_1}--\eqref{intermediate_computation_4} we derive
\begin{equation}\label{f_laplacian_squared_norm_sff_generic}
    \begin{aligned}
        \phantomsection
        \Delta_f|B|^2&=2|\nabla B|^2+2\sum_{i,j,\alpha}h_{ij}^\alpha\Delta^\perp h_{ij}^\alpha-\frac12\langle x,\nabla |B|^2\rangle
        \\&=2|\nabla B|^2+|B|^2+4\sum_{\alpha,\beta}\tr(h^{\alpha}h^\beta)^2-4\sum_{\alpha,\beta}\tr[(h^{\alpha})^2(h^\beta)^2]-2\sum_{\alpha,\beta}[\tr(h^\alpha h^\beta)]^2
        \\&=2|\nabla B|^2+|B|^2-2\sum_{\alpha,\beta}|[h^\alpha,h^\beta]|^2-2\sum_{\alpha,\beta}[\tr(h^\alpha h^\beta)]^2,
    \end{aligned}
\end{equation}
where the last equality follows from the identity $|[h^\alpha,h^\beta]|^2=2\tr[(h^{\alpha})^2(h^\beta)^2]-2\tr(h^{\alpha}h^\beta)^2$. Note that \eqref{f_laplacian_squared_norm_sff_generic} is valid regardless of the vanishing of $H$. 

In what follows, we assume $H$ is nowhere vanishing and we rewrite the equation in this specific context.
We observe that
\begin{align*}
    -2\sumabt|[h^\alpha,h^\beta]|^2 
    &=-2\sumat\left(\sumbo|[h^\alpha,h^\beta]|^2-|[h^{n+1},h^\alpha]|^2\right) 
    \\&=-2\left[\sumao\sumbo|[h^\alpha,h^\beta]|^2-\sumbo|[h^{n+1},h^\beta]|^2\right]+2\sumat|[h^{n+1},h^\alpha]|^2 
    \\&=-2\sumabo|[h^\alpha,h^\beta]|^2+4\sumat|[h^{n+1},h^\alpha]|^2
    \\&=-2\sumabo|[h^\alpha,h^\beta]|^2+8\sumat\tr[(h^{n+1})^2(h^\alpha)^2]-8\sumat\tr(h^{n+1}h^\alpha)^2,
\end{align*}
therefore    
\begin{equation}\label{f_laplacian_squared_norm_sff_first_term}
    -2\sum_{\alpha,\beta}|[h^\alpha,h^\beta]|^2=-2\sumabt|[h^\alpha,h^\beta]|^2-8\sumat\tr[(h^{n+1})^2(h^\alpha)^2]+8\sumat\tr(h^{n+1}h^\alpha)^2.
\end{equation}
By an analogous computation, we derive
\begin{equation}\label{f_laplacian_squared_norm_sff_second_term}
    -2\sum_{\alpha,\beta}[\tr(h^\alpha h^\beta)]^2=-2\sumabt[\tr(h^\alpha h^\beta)]^2-4\sumat[\tr(h^{n+1}h^\alpha)]^2-2|B_\nu|^4.
\end{equation}
Substituting \eqref{f_laplacian_squared_norm_sff_first_term} and \eqref{f_laplacian_squared_norm_sff_second_term} into \eqref{f_laplacian_squared_norm_sff_generic}, the Simons' identity for $|B|^2$ becomes
\begin{equation}\label{f_laplacian_squared_norm_sff_H_nonvanishing}
    \begin{aligned}
        \phantomsection
        \Delta_f|B|^2&=2|\nabla B|^2+|B|^2-2|B_\nu|^4-2\sumabt[\tr(h^\alpha h^\beta)]^2-2\sumabt |[h^\alpha,h^\beta]|^2
        \\&\qquad +4\sumat\tr(h^{n+1}h^\alpha)^2-4\sumat\tr[(h^{n+1})^2(h^\alpha)^2]-4\sumat[\tr(h^{n+1}h^\alpha)]^2
        \\&\qquad +4\sumat \tr(h^{n+1}h^\alpha)^2-4\sumat\tr[(h^{n+1})^2(h^\alpha)^2].
    \end{aligned}
\end{equation}

\subsection{\texorpdfstring{Simons' equation for $|B_\nu|$ with parallel principal normal}{Simons' equation for |B{H/|H|}| with parallel principal normal}}

Assume that the mean curvature vector $H$ is nowhere vanishing and that the principal normal $\nu \doteq H/|H|$ is parallel in the normal bundle (i.e., $\nabla^\perp \nu = 0$). By fixing $\alpha = n+1$ and proceeding analogously to the derivation of \eqref{f_laplacian_squared_norm_sff_generic}, we establish that:
\begin{equation}\label{f_laplacian_squared_norm_principal_sff_generic}
    \Delta_f|B_\nu|^2=2|\nabla B_\nu|^2+|B_\nu|^2(1-2|B_\nu|^2)-2\sumbt |[h^{n+1},h^\beta]|^2-2\sumbt [\tr(h^{n+1}h^\beta)]^2. 
\end{equation}

The parallel principal normal condition allows for a significant simplification of \eqref{f_laplacian_squared_norm_principal_sff_generic}. Since $\nabla^\perp \nu=0$, it follows that $0=\langle R^\perp (e_i,e_j)\nu,\nu_\beta\rangle=\langle[S_\nu,S_{\nu_\beta}]e_i,e_j\rangle$
for every $i,j=1,\ldots,n$ and $\beta=n+2,\ldots,n+p$. Consequently,
\begin{equation}\label{zero_commutator}
    [h^{n+1},h^\beta]=0
\end{equation}
for every $\beta=n+2,\ldots,n+p$.

We next prove that the trace term in \eqref{f_laplacian_squared_norm_principal_sff_generic} also vanishes. Using the fact that $\nu$ is parallel in the normal bundle, along with formulas \eqref{equivalence_gradient_H_and_norm_H_parallel_principal_normal} and \eqref{f_laplacian_norm_H_parallel_principal_normal}, we derive the expression for the $f$-Laplacian of the mean curvature vector field $H$:
\begin{equation}\label{f_laplacian_H_vectori_field_1}
    \begin{aligned}
        \phantomsection
        \Delta_f^\perp H&\doteq\sum_i\left(\nabla_{e_i}^\perp\nabla_{e_i}^\perp H-\nabla^\perp_{\nabla_{e_i}e_i}H\right)-\nabla^\perp_{\nabla f}H
        \\&=(\Delta_f|H|)\nu
        \\&=H\left(\frac12-|B_\nu|^2\right)
    \end{aligned}
\end{equation}
On the other hand, as stated in Section $9$ of \cite{coldingminicozzi_Par}, the relation $LH=H$ holds generally, where $L=\Delta_f^\perp+\frac12+\sum_{k,l}\langle B(e_k,e_l),\cdot \rangle B(e_k,e_l)$. This implies  
\begin{equation}\label{f_laplacian_H_vectori_field_2}
    \begin{aligned}
        \phantomsection
        \Delta_f^\perp H&=\frac{H}{2}-\sum_{k,l}\langle B(e_k,e_l),H\rangle B(e_k,e_l)
        \\&=\frac{H}{2}-|H|\sum_{k,l}\langle B(e_k,e_l),\nu\rangle B(e_k,e_l)
        \\&=H\left(\frac12-|B_\nu|^2\right)-|H|\sum_{k,l}\sumbt\langle B(e_k,e_l),\nu\rangle \langle B(e_k,e_l),\nu_\beta\rangle\nu_\beta.
    \end{aligned}
\end{equation}
Comparing the expressions \eqref{f_laplacian_H_vectori_field_1} and \eqref{f_laplacian_H_vectori_field_2} yields
\begin{equation*}
    0=
    \sumbt\left(\sum_{k,l}h^{n+1}_{kl}h^\beta_{kl}\right)\nu_\beta=\sumbt \tr(h^{n+1}h^\beta)\nu_\beta.
\end{equation*}
Thus, we conclude that
\begin{equation}\label{zero_trace}
    \tr(h^{n+1}h^\beta)=0
\end{equation}
for all $\beta=n+2,\ldots,n+p$. 

Eventually, combining the vanishing terms from \eqref{zero_commutator} and \eqref{zero_trace} with \eqref{f_laplacian_squared_norm_principal_sff_generic}, we arrive at the simplified equation:
\begin{equation}\label{f_laplacian_squared_norm_principal_sff_simplified}
    \Delta_f|B_\nu|^2=2|\nabla B_\nu|^2+|B_\nu|^2(1-2|B_\nu|^2).
\end{equation}
Observing that $|H|^2\leq n|B_\nu|^2$ (which ensures that $|B_\nu|$ cannot vanish), we may apply the well-known Kato inequality to \eqref{f_laplacian_squared_norm_principal_sff_simplified} to deduce that
\begin{equation}\label{f_laplacian_norm_principal_sff}
    \Delta_f|B_\nu|=\frac{|\nabla B_\nu|^2-|\nabla |B_\nu||^2}{|B_\nu|}+|B_\nu|\left(\frac12-|B_\nu|^2\right)\geq |B_\nu|\left(\frac12-|B_\nu|^2\right).
\end{equation}

\subsection{\texorpdfstring{Two lower bounds for $\Delta_f|B|^2$}{Two lower bounds for Delta-f |B|2}}
As we will see in details later on, proving our main theorem requires showing that the function $|B|^2/|H|^2$ is subharmonic with respect to a suitable weighted Laplacian. To achieve this, we need suitable lower bounds for $\Delta_f|B|^2$. Following the approach in \cite{cao_xu_zhao_2024_pinching_theorem_MCF_higher_codimension}, we will establish two distinct estimates: first, a general bound for \eqref{f_laplacian_squared_norm_sff_generic} which holds unconditionally; and second, a better estimate for \eqref{f_laplacian_squared_norm_sff_H_nonvanishing} assuming the mean curvature is nowhere vanishing.

Our estimates rely on the following two algebraic results.
\begin{lem}[\cite{anmin_li_jimin_li_1992}, Theorem 1]\label{first_estimate_theorem}
    Let $A_1,\ldots,A_p$ be symmetric $(n\times n)$-matrices ($p\geq 2$). Denote with $S_{\alpha\beta}=\tr(A^T_\alpha A_\beta)$, $S_\alpha =S_{\alpha\alpha}=N(A_{\alpha})$ and $S=S_1+\ldots+S_p$, where, for a given matrix $A$, $N(A)$ denotes the square norm of $A$. Then
    \begin{equation*}
        \sum_{\alpha,\beta}N(A_\alpha A_\beta-A_\beta A_\alpha)+\sum_{\alpha,\beta}S_{\alpha\beta}^2\leq\frac32 S^2.
    \end{equation*}
\end{lem}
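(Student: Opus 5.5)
The plan is to prove this matrix inequality (the Li--Li estimate) by reducing the left-hand side to a statement about the Gram matrix $(S_{\alpha\beta})$, and then bounding the commutator terms pair by pair with a sharp two-matrix estimate.

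\textbf{Reduction.} Both sides are invariant under an orthogonal change of the tuple $(A_1,\ldots,A_p)$, that is, under replacing $A_\alpha$ by $\sum_\beta O_{\alpha\beta}A_\beta$ with $O\in O(p)$. For the term $\sum_{\alpha,\beta}S_{\alpha\beta}^2$ this holds because it equals the squared Frobenius norm of the Gram matrix $G=(S_{\alpha\beta})$, which transforms by conjugation $G\mapsto OGO^T$. For the commutator sum, note that $\sum_{\alpha,\beta}N([A_\alpha,A_\beta])$ is the norm of the bilinear map $(\xi,\eta)\mapsto[\sum_\alpha\xi_\alpha A_\alpha,\sum_\beta\eta_\beta A_\beta]$, summed over an orthonormal basis, so it does not depend on the basis chosen. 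We may therefore diagonalize $G$ and assume $S_{\alpha\beta}=0$ for $\alpha\neq\beta$. The inequality then becomes
\begin{equation*}
\sum_{\alpha\neq\beta}N([A_\alpha,A_\beta])+\sum_\alpha S_\alpha^2\leq\frac32\Bigl(\sum_\alpha S_\alpha\Bigr)^2 .
\end{equation*}

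\textbf{Key pairwise estimate.} For symmetric $A,B$ one has $N([A,B])\leq 2N(A)N(B)$; this is the B\"ottcher--Wenzel / Chern--do Carmo--Kobayashi inequality. One proves it by diagonalizing $A=\mathrm{diag}(a_i)$, so that
\begin{equation*}
N([A,B])=\sum_{i,j}(a_i-a_j)^2b_{ij}^2 ,
\end{equation*}
and then bounding $(a_i-a_j)^2\leq 2(a_i^2+a_j^2)\leq 2\sum_k a_k^2$ for $i\neq j$. Using it only gives
\begin{equation*}
\sum_{\alpha\neq\beta}4S_\alpha S_\beta+\sum_\alpha S_\alpha^2 ,
\end{equation*}
which can be as large as $2S^2-\sum_\alpha S_\alpha^2$ and exceeds $\tfrac32S^2$ when $p\geq3$. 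The pairwise estimate alone is therefore not sufficient, and this is the step I expect to be the main obstacle.

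\textbf{Improvement.} We need to exploit the fact that the commutators cannot all be extremal simultaneously. The equality case of the pairwise bound forces $A_\alpha,A_\beta$ to be, up to an orthogonal conjugation, multiples of $\mathrm{diag}(1,-1,0,\ldots)$ and of the off-diagonal $2\times2$ block. A third matrix cannot be extremal with both of them at once, because the span of $2\times2$ symmetric traceless matrices is only two-dimensional.

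To make this quantitative, fix $\alpha$, diagonalize $A_\alpha=\mathrm{diag}(a_i)$, and sum the commutator terms over $\beta\neq\alpha$:
\begin{equation*}
\sum_{\beta\neq\alpha}N([A_\alpha,A_\beta])=\sum_{i\neq j}(a_i-a_j)^2\sum_{\beta\neq\alpha}(b^\beta_{ij})^2 .
\end{equation*}
We then use the orthogonality $S_{\alpha\beta}=0$ together with a refined estimate of the form
\begin{equation*}
(a_i-a_j)^2\leq 2S_\alpha-\textstyle\sum_{k\neq i,j}a_k^2 .
\end{equation*}
Summing over $\alpha$ and distributing the weights should reduce the claim to an elementary inequality in the variables $S_\alpha$. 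Concretely, the target is to prove
\begin{equation*}
\sum_{\alpha\neq\beta}N([A_\alpha,A_\beta])\leq 2S_1S_2+2S_1S_2\quad\text{(for the two largest $S_\alpha$)}\;+\;\text{lower-order cross terms}
\end{equation*}
such that, after adding $\sum_\alpha S_\alpha^2$, the total is at most $\tfrac32S^2$. For $p=2$ the inequality follows directly: the left-hand side is at most $4S_1S_2+S_1^2+S_2^2\leq\tfrac32(S_1+S_2)^2$, since this is equivalent to $0\leq\tfrac12(S_1-S_2)^2$.

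For general $p$, my plan is to follow the original Li--Li argument. Pick the pair $(\alpha,\beta)$ attaining the maximal commutator, apply the sharp bound only to that pair, and use the cruder bound $N([A,B])\leq N(A)N(B)$-type control on the remaining pairs. This last bound is justified in the simultaneous normal form through the identity
\begin{equation*}
\sum_\beta N([A_\alpha,A_\beta])\leq 2S_\alpha\sum_\beta S_\beta-\ldots
\end{equation*}
Finally, we check that the quadratic form obtained in the $S_\alpha$ is dominated by $\tfrac32S^2$, with equality exactly when only two matrices are nonzero, they have $S_1=S_2$, and they are in the extremal $2\times2$ configuration.
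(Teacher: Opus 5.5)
The paper gives no proof of this lemma; it quotes it from Li--Li. Judged on its own, your proposal gets several pieces right: the reduction to a diagonal Gram matrix, the pairwise bound $N([A,B])\le 2N(A)N(B)$, and the case $p=2$ are all correct. (Minor slip: the crude bound is $4\sum_{\alpha<\beta}S_\alpha S_\beta+\sum_\alpha S_\alpha^2=2S^2-\sum_\alpha S_\alpha^2$, not $\sum_{\alpha\neq\beta}4S_\alpha S_\beta$.)

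\textbf{The gap.} For $p\ge 3$, which is the whole content of the lemma, there is no argument. The one concrete mechanism you propose rests on a false inequality. That mechanism uses the sharp bound on the maximal pair and $N([A_\gamma,A_\delta])\le S_\gamma S_\delta$ on every other pair. Take $n=p=4$, with $E_{ij}$ the matrix units:
\[
A_1=\mathrm{diag}(1,-1,0,0),\quad A_2=E_{12}+E_{21},\quad A_3=\mathrm{diag}(0,0,1,-1),\quad A_4=E_{34}+E_{43}.
\]
The Gram matrix is already $2I$, and both $N([A_1,A_2])$ and $N([A_3,A_4])$ equal $8=2S_\alpha S_\beta$. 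So two disjoint pairs can be simultaneously extremal. Note also that coefficient $1$ is sharper than $2$, not ``cruder''.

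Your ``refined estimate'' $(a_i-a_j)^2\le 2S_\alpha-\sum_{k\ne i,j}a_k^2$ is weaker than $(a_i-a_j)^2\le 2(a_i^2+a_j^2)$ and involves only $A_\alpha$. More generally, any bound of the form $N([A_\alpha,A_\beta])\le\phi(S_\alpha,S_\beta)$ is at best $2S_\alpha S_\beta$, since this is attained. That yields only $2S^2-\sum_\alpha S_\alpha^2$. So you need an estimate that couples different $\beta$'s, and the unspecified ``$-\ldots$'' and ``lower-order cross terms'' are exactly that missing content.

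\textbf{One way to close it.} Use the mutual orthogonality of the $A_\beta$, $\beta\ne\alpha$, against a fixed $A_\alpha$.

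First, diagonalize $A_\alpha=\mathrm{diag}(a_1,\dots,a_n)$. The quadratic form $B\mapsto N([A_\alpha,B])$ on symmetric matrices has eigenvalues $(a_i-a_j)^2$ for $i<j$, with eigenvector $E_{ij}+E_{ji}$, together with zeros.

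Second, any $k$ of these eigenvalues sum to at most $(k+1)S_\alpha$, because the Laplacian of a graph with $k$ edges has largest eigenvalue at most $k+1$. For instance $(a_1-a_2)^2+(a_1-a_3)^2\le 3S_\alpha$, with equality for $a\propto(2,-1,-1,0,\dots)$. This replaces your qualitative remark that $A_\alpha$ cannot be extremal with two orthogonal partners at once.

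Third, the $A_\beta$ are orthogonal with squared norms $S_\beta$. Ky Fan's maximum principle plus Abel summation therefore give
\[
\sum_{\beta\neq\alpha}N([A_\alpha,A_\beta])\le S_\alpha\Bigl(\sum_{\beta\neq\alpha}S_\beta+\max_{\beta\neq\alpha}S_\beta\Bigr).
\]

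Finally, sum over $\alpha$ with $S_1\ge S_2\ge\cdots\ge S_p$. This bounds the left-hand side of the lemma by $S^2+S_1S_2+S_1(S-S_1)$. That is at most $\frac32 S^2$, because $S_1S_2\le\frac12S_1^2+\frac12S_2^2\le\frac12S_1^2+\frac12(S-S_1)^2$.
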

\begin{lem}[\cite{cheng_2002}, Lemma 3.2]\label{second_estimate_theorem}
    Let $a_1,\ldots,a_n$ and $b_{ij}$ (for $i,j=1,\ldots,n$) be real numbers satisfying $\sum_{i=1}^na_i=0$, $\sum_{i=1}^nb_{ii}=0$, $\sum_{i,j=1}^nb_{ij}^2=b$, and $b_{ij}=b_{ji}$. Then
    \begin{equation*}
        -\left(\sum_{i=1}^nb_{ii}a_i\right)^2+\sum_{i,j=1}^nb_{ij}^2a_ia_j-\sum_{i,j=1}^nb_{ij}^2a_i^2\geq -b\sum_{i=1}^na_i^2.
    \end{equation*}
\end{lem}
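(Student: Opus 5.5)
The plan is to rewrite the left-hand side as minus a sum of two nonnegative quantities, and then bound each of them separately by $\sum_i a_i^2$ times the corresponding part of $b$. Set $A\doteq\sum_{i=1}^n a_i^2$. We split $b$ into a diagonal and an off-diagonal part,
\[
b_d\doteq\sum_{i=1}^n b_{ii}^2,\qquad b_o\doteq\sum_{i\neq j}b_{ij}^2,\qquad b=b_d+b_o .
\]

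\textbf{Step 1: symmetrize the mixed terms.} Since $b_{ij}^2$ is symmetric in $i,j$, we have $\sum_{i,j}b_{ij}^2a_i^2=\frac12\sum_{i,j}b_{ij}^2(a_i^2+a_j^2)$. Therefore
\[
\sum_{i,j}b_{ij}^2a_ia_j-\sum_{i,j}b_{ij}^2a_i^2=-\frac12\sum_{i,j}b_{ij}^2(a_i-a_j)^2=-\frac12\sum_{i\neq j}b_{ij}^2(a_i-a_j)^2 ,
\]
because the diagonal terms vanish. Hence the claimed inequality is equivalent to
\[
\Big(\sum_i b_{ii}a_i\Big)^2+\frac12\sum_{i\neq j}b_{ij}^2(a_i-a_j)^2\leq b\,A .
\]

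\textbf{Step 2: bound the two pieces separately.} For the diagonal piece, the Cauchy--Schwarz inequality gives $\big(\sum_i b_{ii}a_i\big)^2\leq b_d\,A$. For the off-diagonal piece, for every $i\neq j$ we have
\[
(a_i-a_j)^2\leq 2(a_i^2+a_j^2)\leq 2A .
\]
It follows that
\[
\frac12\sum_{i\neq j}b_{ij}^2(a_i-a_j)^2\leq A\,b_o .
\]
Adding the two bounds gives $(b_d+b_o)A=bA$, which is exactly the inequality required after Step 1.

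\textbf{Remarks.} The main point is Step 1, namely recognizing the difference of the two quadratic forms as $-\frac12\sum b_{ij}^2(a_i-a_j)^2$. After that, the bounds are elementary. In this argument the trace conditions $\sum_i a_i=0$ and $\sum_i b_{ii}=0$ are not needed. They are the natural hypotheses in the application, where the $a_i$ are the eigenvalues of $\mathring h^{n+1}$ and the $b_{ij}$ are the entries of $h^\alpha$ for $\alpha\geq n+2$ in a basis diagonalizing $h^{n+1}$. They could also be used to sharpen the constant, but that is not required here.
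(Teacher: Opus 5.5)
Your proof is correct. The paper gives no proof of this lemma; it simply imports it from Cheng. Your argument is a self-contained, elementary replacement, and its main device is the one the paper itself uses a few lines later in \eqref{f_laplacian_squared_norm_sff_H_nonvanishing_term_3}. That device is the symmetrization $\sum_{i,j}b_{ij}^2a_ia_j-\sum_{i,j}b_{ij}^2a_i^2=-\frac12\sum_{i\neq j}b_{ij}^2(a_i-a_j)^2$, followed by $(a_i-a_j)^2\le 2\sum_k a_k^2$. The only additional ingredient you need is Cauchy--Schwarz for the diagonal term, $\bigl(\sum_i b_{ii}a_i\bigr)^2\le\bigl(\sum_i b_{ii}^2\bigr)\bigl(\sum_i a_i^2\bigr)$.

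What your route buys is transparency. It shows that the hypotheses $\sum_i a_i=0$ and $\sum_i b_{ii}=0$ play no role inside the lemma. In the proposition, the trace-freeness of $h^\alpha$ for $\alpha\geq n+2$ is genuinely used, but only outside the lemma: it gives $\tr(h^{n+1}h^\alpha)=\tr(\mathring h^{n+1}h^\alpha)$, which lets one work with $\mathring h^{n+1}$ and obtain the better factor $|\mathring B_\nu|^2$. Your route also lets the proposition be proved with Lemma \ref{first_estimate_theorem} as the only external algebraic input.

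One correction to your closing remark: the trace conditions cannot sharpen the constant. The inequality is already an equality for trace-free data. Take $a=(1,-1,0,\dots,0)$, $b_{11}=-b_{22}=t$, $b_{12}=b_{21}=s$, and all other $b_{ij}=0$; then both sides equal $-4(t^2+s^2)$.
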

Keeping in mind the above two lemmas, we can then prove the following
\begin{prop} Let $x:M^n\to \mathbb R^{n+p}$ ($n\geq 2$, $p\geq 1$) be a self-shrinker. Then
\begin{equation}\label{f_laplacian_squared_norm_sff_generic_estimate}
    \Delta_f|B|^2\geq 2|\nabla B|^2+|B|^2(1-3|B|^2).
\end{equation}
Furthermore, if $|H|>0$ and $M$ satisfies a $\frac{4}{3n}$-quadratic pinching condition, then 
\begin{equation}\label{f_laplacian_squared_norm_sff_H_nonvanishing_estimate}
    \Delta_f|B|^2\geq 2|\nabla B|^2+|B|^2(1-2|B_\nu|^2)+3|B_I|^4.
\end{equation}
\end{prop}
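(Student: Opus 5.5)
The plan is to start from the two Simons identities already derived, \eqref{f_laplacian_squared_norm_sff_generic} and \eqref{f_laplacian_squared_norm_sff_H_nonvanishing}, and bound their quartic reaction terms from below by purely algebraic means: Lemma \ref{first_estimate_theorem} for the sums of commutators and traces, and Lemma \ref{second_estimate_theorem} for the mixed terms involving $h^{n+1}$.

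\emph{First estimate.} In \eqref{f_laplacian_squared_norm_sff_generic} the quartic part is $-2\sum_{\alpha,\beta}|[h^\alpha,h^\beta]|^2-2\sum_{\alpha,\beta}[\tr(h^\alpha h^\beta)]^2$.
\begin{itemize}
\item For $p\geq 2$, apply Lemma \ref{first_estimate_theorem} with $A_\alpha=h^\alpha$, so that $S=|B|^2$. This bounds the bracket by $\frac32|B|^4$, hence the quartic part is at least $-3|B|^4$.
\item For $p=1$ the commutator vanishes and the trace term is exactly $-2|B|^4\geq-3|B|^4$.
\end{itemize}
Either way \eqref{f_laplacian_squared_norm_sff_generic_estimate} follows.

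\emph{Second estimate: the purely normal block.} Work with \eqref{f_laplacian_squared_norm_sff_H_nonvanishing} and split its terms into three groups.
\begin{itemize}
\item The terms indexed by $\alpha,\beta\geq n+2$ are controlled by Lemma \ref{first_estimate_theorem} applied to $h^{n+2},\dots,h^{n+p}$. When $p-1=1$ the trace term alone suffices, and when $p=1$ these terms are absent. This gives a lower bound $-3|B_I|^4$.
\item The term $-2|B_\nu|^4$ is kept as it is.
\end{itemize}

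\emph{Second estimate: the mixed terms.} The remaining mixed terms sum to
\[
4\sumat\Bigl(2\tr(h^{n+1}h^\alpha)^2-2\tr[(h^{n+1})^2(h^\alpha)^2]-[\tr(h^{n+1}h^\alpha)]^2\Bigr).
\]
\begin{itemize}
\item Write $h^{n+1}=\frac{|H|}{n}\Id+\mathring h^{n+1}$ and diagonalize $\mathring h^{n+1}=\mathrm{diag}(a_1,\dots,a_n)$, with $\sum a_i=0$. Set $b_{ij}=h^\alpha_{ij}$, so $\sum b_{ii}=0$.
\item The identity part drops out. Indeed $\tr(h^{n+1}h^\alpha)=\sum a_ib_{ii}$ because $h^\alpha$ is traceless. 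Also $\tr(h^{n+1}h^\alpha)^2-\tr[(h^{n+1})^2(h^\alpha)^2]=-\frac12\sum b_{ij}^2(\lambda_i-\lambda_j)^2$ depends only on the differences of eigenvalues, so it equals $\sum b_{ij}^2a_ia_j-\sum b_{ij}^2a_i^2$.
\item Each summand then becomes
\[
\Bigl[-(\textstyle\sum a_ib_{ii})^2+\sum b_{ij}^2a_ia_j-\sum b_{ij}^2a_i^2\Bigr]-\tfrac12\sum b_{ij}^2(a_i-a_j)^2 .
\]
\item Lemma \ref{second_estimate_theorem} bounds the bracket below by $-|h^\alpha|^2|\mathring h^{n+1}|^2$.
\item The last piece is $-\frac12|[\mathring h^{n+1},h^\alpha]|^2$. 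Using the standard bound $|[A,B]|^2\leq 2|A|^2|B|^2$, valid for symmetric $A,B$ (or directly $(a_i-a_j)^2\le 2(a_i^2+a_j^2)$ with care), it is at least $-|h^\alpha|^2|\mathring h^{n+1}|^2$.
\item Summing over $\alpha$, the mixed terms are bounded below by $-8|\mathring B_\nu|^2|B_I|^2$.
\end{itemize}

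\emph{Using the pinching.} It remains to check
\[
-2|B_\nu|^4-3|B_I|^4-8|\mathring B_\nu|^2|B_I|^2\;\geq\;-2|B_\nu|^2|B|^2+3|B_I|^4 .
\]
Since $|B|^2=|B_\nu|^2+|B_I|^2$, this is equivalent to $-8|\mathring B_\nu|^2|B_I|^2\geq -2|B_\nu|^2|B_I|^2+6|B_I|^4$.
\begin{itemize}
\item The $\frac{4}{3n}$-pinching, combined with $|B|^2=\frac{|H|^2}{n}+|\mathring B_\nu|^2+|B_I|^2$, gives $\frac{|H|^2}{n}\geq 3(|\mathring B_\nu|^2+|B_I|^2)$.
\item Substituting $|B_\nu|^2=\frac{|H|^2}{n}+|\mathring B_\nu|^2$ on the right-hand side and using this inequality shows that the right side is at most $-8|\mathring B_\nu|^2|B_I|^2$, exactly.
\end{itemize}
This yields \eqref{f_laplacian_squared_norm_sff_H_nonvanishing_estimate}.

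The main obstacle is the mixed-term bookkeeping. One must get the constants sharp, namely $-8|\mathring B_\nu|^2|B_I|^2$ and not worse, because the pinching step closes with no room to spare. This requires pairing Cheng's lemma with the commutator bound precisely as above, after carefully verifying that the $\frac{|H|}{n}\Id$ contributions cancel.
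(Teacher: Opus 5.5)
Your proposal is correct and follows essentially the same route as the paper: Li--Li's lemma for the block with $\alpha,\beta\geq n+2$, then for the mixed terms Cheng's lemma together with the bound $(\mathring a_i-\mathring a_j)^2\le 2|\mathring h^{n+1}|^2$, giving $-8|\mathring B_\nu|^2|B_I|^2$, and finally the $\frac{4}{3n}$-pinching in the equivalent form $\frac{|H|^2}{n}\ge 3(|\mathring B_\nu|^2+|B_I|^2)$. You also treat the cases $p=1$ and $p=2$ separately, where Li--Li's lemma (stated for at least two matrices) does not literally apply; the paper passes over these silently.
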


\begin{proof} Note first that, a direct application of Lemma \ref{first_estimate_theorem} leads to
\begin{equation*}
    -2\sum_{\alpha,\beta}|[h^\alpha,h^\beta]|^2-2\sum_{\alpha,\beta}[\tr(h^\alpha h^\beta)]^2\geq -3|B|^4.
\end{equation*}
Inserting the previous estimate in \eqref{f_laplacian_squared_norm_sff_generic} immediately yields the validity of \eqref{f_laplacian_squared_norm_sff_generic_estimate}, which holds regardless of whether $H$ vanishes. 

Establishing the lower bound for \eqref{f_laplacian_squared_norm_sff_H_nonvanishing} is more involved. Again, by Lemma \ref{first_estimate_theorem} we have
\begin{equation}\label{f_laplacian_squared_norm_sff_H_nonvanishing_term_1}
    -2\sumabt[\tr(h^\alpha h^\beta)]^2-2\sumabt |[h^\alpha,h^\beta]|^2\geq -3|B_I|^4.
\end{equation}
Let us choose an orthonormal basis that diagonalizes $\mathring h^{n+1}$, i.e. $\mathring h^{n+1}=(\mathring a_i\delta_{ij})_{i,j=1,\ldots,n}$. For a fixed $\alpha\neq n+1$, a direct calculation yields:
        \begin{align*}
            \tr(\mathring h^{n+1}h^\alpha)^2&=\sum_{i,j}(\mathring h^{n+1}h^\alpha)_{ij}(\mathring h^{n+1}h^\alpha)_{ji}=\sum_{i,j}(h^\alpha_{ij})^2\mathring a_i\mathring a_j,
            \\
            \tr(\mathring h^{n+1}h^\alpha)&=\sum_{i,j}(\mathring h^{n+1})_{ij} h^\alpha_{ji}=\sum_i \mathring a_i h^\alpha_{ii},
            \\
            \tr[(\mathring h^{n+1})^2(h^\alpha)^2]&=\sum_{i,k}[(\mathring h^{n+1})^2]_{ik}[(h^\alpha)^2]_{ki}=\sum_{i,j}(h^\alpha_{ij})^2\mathring a_i^2.
        \end{align*}
        Combining these identities, we obtain
        \begin{equation}\label{f_laplacian_squared_norm_sff_H_nonvanishing_term_2}
            \begin{aligned}
                \phantomsection
                4\sumat &\tr(h^{n+1}h^\alpha)^2-4\sumat [\tr(h^{n+1}h^\alpha)]^2-4\sumat\tr[(h^{n+1})^2(h^\alpha)^2]
                \\&=4\sumat \tr(\mathring h^{n+1}h^\alpha)^2-4\sumat [\tr(\mathring h^{n+1}h^\alpha)]^2-4\sumat\tr[(\mathring h^{n+1})^2(h^\alpha)^2] 
                \\&\geq -4|\mathring B_\nu|^2|B_I|^2,
            \end{aligned}
        \end{equation}
        where the first identity follows from $\mathring h^{n+1}=h^{n+1}-\frac{|H|}{n}\Id$ and the last inequality follows from Lemma \ref{second_estimate_theorem}. In the same orthonormal basis, $h^{n+1}=( a_i\delta_{ij})_{i,j=1,\ldots,n}$, where $a_i=\mathring a_i+\frac{|H|}{n}$. Therefore,
        \begin{equation}\label{f_laplacian_squared_norm_sff_H_nonvanishing_term_3}
            \begin{aligned}
                \phantomsection
                4\sumat \tr(h^{n+1}h^\alpha)^2-4\sumat\tr[(h^{n+1})^2(h^\alpha)^2]&=4\sumat\sum_{i,j}\left[a_ia_j(h^\alpha_{ij})^2-a_i^2(h^\alpha_{ij})^2\right]
                \\&=-2\sumat\sum_{i,j}(a_i-a_j)^2(h^\alpha_{ij})^2
                \\&=-2\sumat \sum_{i,j}(\mathring a_i-\mathring a_j)^2(h^\alpha_{ij})^2
                \\&\geq -4\left(\sum_k\mathring a_k^2\right)\left(\,\, \sumat\sum_{i,j}(h_{ij}^\alpha)^2\right)
                \\&=-4|\mathring B_\nu|^2|B_I|^2.
            \end{aligned}
        \end{equation}
Combining \eqref{f_laplacian_squared_norm_sff_H_nonvanishing} and estimates \eqref{f_laplacian_squared_norm_sff_H_nonvanishing_term_1}--\eqref{f_laplacian_squared_norm_sff_H_nonvanishing_term_3} with the pinching condition $|B|^2\leq\frac{4}{3n}|H|^2$, we finally get
\begin{align*}
    \Delta_f|B|^2&\geq2|\nabla B|^2+|B|^2 -2|B_\nu|^4-3|B_I|^4-8|\mathring B_\nu|^2|B_I|^2
    \\&=2|\nabla B|^2+|B|^2-2|B_\nu|^4-3|B_I|^2(|B|^2\!-\!|B_\nu|^2)-8\left(|B_\nu|^2\!-\!\frac{|H|^2}{n}\right)|B_I|^2
    \\&\geq 2|\nabla B|^2+|B|^2-2|B_\nu|^4+|B_I|^2\left(3|B|^2-5|B_\nu|^2\right)
    \\&=2|\nabla B|^2+|B|^2(1-2|B_\nu|^2)+3|B_I|^4.
\end{align*}
\end{proof}

\section{Proof of Theorem \ref{thm:extensionsmoczyck}}

In \cite{smoczyk_2005}, K. Smoczyk established the following classification result:

\begin{thm}[\cite{smoczyk_2005}, Theorem 1.2]\label{smoczyk_2005_theorem_2}
    Let $x:M^n\to \mathbb R^{n+p}$ ($n\geq 2$, $p\geq 1$) be a complete self-shrinker with $H\neq 0$ and parallel principal normal $\nu\doteq H/|H|$. Suppose further that $M$ has uniformly bounded geometry, i.e. there exist constants $c_k$ such that $|\nabla^kB|\leq c_k$ holds uniformly on $M$ for any $k\geq 0$. Then $M$ must belong to one of the following classes:
    \begin{equation*}
        M=\Gamma\times\mathbb R^{n-1},\qquad M=\widetilde M^r\times\mathbb R^{n-r}.
    \end{equation*}
    Here, $\Gamma$ is one of the curves found by Abresch and Langer and $\widetilde M^r$ is a complete, minimal submanifold of the sphere $\mathbb{S}^{p+r-1}(\sqrt{2r})\subseteq \mathbb R^{p+r}$, where $0<r=\text{rank}(B_\nu)\leq n$ denotes the rank of the principal second fundamental form.
\end{thm}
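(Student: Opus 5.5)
The plan is to run Huisken's codimension-one argument inside the normal direction $\nu$, using the two Simons-type identities already available under the parallel principal normal assumption:
\[
\Delta_f|H|=|H|\Bigl(\tfrac12-|B_\nu|^2\Bigr),\qquad \Delta_f|B_\nu|\geq |B_\nu|\Bigl(\tfrac12-|B_\nu|^2\Bigr).
\]
The goal is first to show that the ratio $|B_\nu|/|H|$ is constant, then to exploit the equality cases.

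\emph{Step 1: a drift inequality for $w=\log(|B_\nu|/|H|)$.} Since $|B_\nu|\geq |H|/\sqrt n>0$, the function $w$ is smooth. Dividing the two identities gives
\[
\Delta_f\log|H|=\tfrac12-|B_\nu|^2-|\nabla\log|H||^2,\qquad
\Delta_f\log|B_\nu|\geq \tfrac12-|B_\nu|^2-|\nabla\log|B_\nu||^2 .
\]
Subtracting, I expect
\[
\Delta_f w+\langle \nabla w,\nabla\log(|B_\nu||H|)\rangle\geq 0,
\]
equivalently $\Delta_{f-2\log|H|}\,w+|\nabla w|^2\geq0$. In the weighted measure $|H|^2e^{-f}\dvol$ this is a subsolution-type inequality for $e^{w}=|B_\nu|/|H|$.

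\emph{Step 2: global argument forcing $w$ to be constant.} Here Smoczyk's bounded geometry is used. I would multiply by a cutoff $\phi^2$ and integrate against $|H|^2e^{-f}$, Huisken-style. Concretely, I would test the inequality for $u=|B_\nu|^2/|H|^2$ against $\phi^2 u\,|H|^2 e^{-f}$. The bounds $|B|,|\nabla B|\leq C$, together with the Gaussian weight, make all boundary terms vanish as the cutoff is removed; the polynomial volume growth that this requires follows from completeness of a self-shrinker with bounded $B$. This yields
\[
\int |\nabla u|^2|H|^2e^{-f}\leq 0 .
\]
Hence $|B_\nu|=\lambda|H|$ for a constant $\lambda$, and every inequality used becomes an equality. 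In particular Kato's inequality is an equality and $|\nabla B_\nu|=|\nabla|B_\nu||$, so $\nabla h^{n+1}=\nabla\log|H|\otimes h^{n+1}$. Combined with Codazzi (valid for $h^{n+1}$ because $\nu$ is parallel), this shows that $P\doteq h^{n+1}/|H|$ is a parallel symmetric tensor.

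\emph{Step 3: splitting.} Because $P$ is parallel, its eigenspaces give parallel distributions. De Rham then splits $M=M_1\times\mathbb R^{n-r}$, where $\ker P$ is the flat factor and $r=\operatorname{rank}P$. Two cases remain:
\begin{itemize}
\item If $r=1$, $M_1$ is a self-shrinking curve with $\nu$ parallel, so it is planar, and it is an Abresch--Langer curve.
\item If $r\geq2$, Codazzi applied to $\nabla h^{n+1}=\nabla\log|H|\otimes h^{n+1}$ forces $\nabla|H|=0$ on $M_1$ and $P=\frac1r\Id$ there. Then $\langle x,\nu\rangle=-2|H|$ is constant and $\bar\nabla(x+2|H|^{-1}\nu)$ vanishes along $M_1$. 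Hence $M_1$ lies in a sphere of radius $\sqrt{2r}$, and within it the self-shrinker equation reduces to minimality.
\end{itemize}

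The main obstacle is Step 2: justifying the integration by parts, or the maximum principle, on a noncompact $M$. The degenerate weight $|H|^2$ and the drift $\nabla\log|H|$ are only controlled through the uniform bounds on $B$ and its derivatives. I would first try the cutoff argument using $|\nabla\log|H||\leq |B_\nu||\nabla f|/|H|\leq C|x|$, which the Gaussian weight absorbs. Failing that, I would apply an Omori--Yau maximum principle to $u$, which bounded geometry makes available.
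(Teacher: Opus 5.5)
Steps 1 and 2 follow the same route as the paper's treatment of this part: $|B_\nu|/|H|$ is shown to be $\Delta_{f-\log|H|^2}$-subharmonic and hence constant, and equality in Kato then gives $\nabla_i|H|\,h^{n+1}_{jk}=|H|\nabla_i h^{n+1}_{jk}$. Step 3, however, has a genuine gap.

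Everything you establish concerns only $B_\nu$; the components $h^\beta$, $\beta\geq n+2$, are never controlled. The relations $[h^{n+1},h^\beta]=0$ and $\tr(h^{n+1}h^\beta)=0$ only make $h^\beta$ block-diagonal with respect to $\operatorname{Im}P\oplus\ker P$ (and zero on $\operatorname{Im}P$ when $r=1$). The block of $h^\beta$ on $\ker P\times\ker P$ is completely free. This has two consequences:
\begin{itemize}
\item The $\ker P$ leaves need not be flat, since by Gauss their curvature is built from exactly that block.
\item More seriously, de Rham gives at best an intrinsic splitting of the universal cover, while the statement is extrinsic. To get $\widetilde M^r\times\mathbb R^{n-r}$ with $\widetilde M^r\subset\mathbb S^{p+r-1}(\sqrt{2r})\subset\mathbb R^{p+r}$, or $\Gamma\times\mathbb R^{n-1}$ with $\Gamma$ planar, you need $B(X,\cdot)=0$ in \emph{all} normal directions for $X\in\ker P$. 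That is, with $\hat P=2|H|h^{n+1}$, you need $B=\hat P*B$ when $\nabla^\perp H\equiv0$, and $B_I\equiv0$ when $\nabla^\perp H\not\equiv0$.
\end{itemize}
Your sphere argument only uses $\overline\nabla_X(x+c\nu)=X-cS_\nu X$ for $X\in\operatorname{Im}P$. (Incidentally, the correct constant is $c=2|H|=\sqrt{2r}$, not $2|H|^{-1}$.) It says nothing about whether the slices lie in parallel $(p+r)$-planes.

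This vanishing is not a local consequence of what you have. In Smoczyk's proof, as recounted in the paper, it is the second global use of the hypothesis. Differentiating $B(\theta,\cdot)=0$ with $\theta=\nabla f$ and using Codazzi gives $\nabla_\theta(B-\hat P*B)=-\frac12(B-\hat P*B)$. Hence $|B|^2-|\hat P*B|^2$ behaves like $e^{-t}$ along integral curves of $\theta$. Analogously, $|B_I|^2$ behaves like $e^{-t}$ along integral curves of $\mathring\theta=\theta-\nabla|H|/|H|$ in the rank-one case. Letting $t\to-\infty$, boundedness of $|B|$ (properness, in the paper's extension) forces these quantities to vanish. Your proposal needs this step or a substitute for it.

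There are also smaller problems in Step 2:
\begin{itemize}
\item Testing $u=|B_\nu|^2/|H|^2$ against $\phi^2u|H|^2e^{-f}$ leaves the error term $\int|\nabla\phi|^2|B_\nu|^4|H|^{-2}e^{-f}$. Bounds on $B$ do not control it, because $|H|$ may tend to $0$ at infinity.
\item Your estimate $|\nabla\log|H||\leq C|x|$ presupposes an upper bound on $|B_\nu|/|H|$, which is not available.
\item The fix is to work with $v=|B_\nu|/|H|$, which Step 1 already shows is $\Delta_{f-\log|H|^2}$-subharmonic. The error term then becomes $\int|\nabla\phi|^2|B_\nu|^2e^{-f}$, and you only need $|B_\nu|\in L^2(e^{-f}\dvol)$.
\item Completeness plus bounded $B$ does not imply polynomial volume growth. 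The complete, non-closed self-shrinking plane curves of the Abresch--Langer family (tangent-angle period an irrational multiple of $2\pi$) have bounded geometry and positive curvature. Yet they are dense in an annulus, so their Gaussian-weighted length is infinite.
\end{itemize}
Finiteness of the weighted integrals must therefore come from an extra assumption such as properness, which is exactly what the paper uses.
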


Smoczyk's proof of Theorem \ref{smoczyk_2005_theorem_2} relies on establishing the following two facts:
\begin{align}
    &\frac{|B_\nu|^2}{|H|^2}=\text{const},\label{first_smoczyk_condition}
    \\&\left|\nabla_i|H|h^{n+1}_{jk}-|H|\nabla_ih^{n+1}_{jk}\right|=0\label{second_smoczyk_condition}.
\end{align}
These are derived via a clever integration by parts against the Gaussian kernel. This is one of the three instances in his proof where the bounded geometry condition is required, as this ensures that boundary terms vanish at infinity. Once these relations are established, the first part of the proof proceeds analogously to Theorem 1.1 in the same paper: one exploits \eqref{first_smoczyk_condition} and \eqref{second_smoczyk_condition} to demonstrate that either $\nabla^\perp H\equiv 0$ everywhere, or $B_\nu$ admits only one non-zero eigenvalue. The conclusion is then reached, in both the cases, by exploiting again the boundedness assumption on $|B|$. 

Later on, in \cite{LiWei}, Theorem \ref{smoczyk_2005_theorem_2} was improved substituting the assumption $|\nabla^{k}B|\leq c_{k}$ with the properness of the immersion and the boundedness of $|B_{I}|$. In the following we will show that indeed the latter assumption is not needed, yielding the validity of Theorem \ref{thm:extensionsmoczyck} stated in the Introduction.

Before starting with the proof of Theorem \ref{thm:extensionsmoczyck}, we state the following straightforward computational lemma, that we will need later.

\begin{lem}\label{h_laplacian_fraction_lemma}
    Given two smooth functions $u,v\in C^\infty(M)$, with $v>0$, for any $h\in C^{\infty}(M)$, it holds that
    \begin{equation*}
        \Delta_h\left(\frac uv\right)+2\langle\nabla\left(\frac uv\right),\nabla\log v\rangle=\frac{\Delta_hu}{v}-\frac{u}{v^2}\Delta_hv.
    \end{equation*}
\end{lem}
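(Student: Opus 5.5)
This is a direct pointwise computation; no geometric input is needed. I would expand both sides using the definitions $\Delta_h w=\Delta w-\langle\nabla h,\nabla w\rangle$ and $\nabla(u/v)=\frac{\nabla u}{v}-\frac{u\nabla v}{v^2}$, and then compare.

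\emph{The unweighted identity.} I first prove the statement for the ordinary Laplacian. Differentiating the expression for $\nabla(u/v)$ once more gives
\begin{equation*}
\Delta\left(\frac uv\right)=\frac{\Delta u}{v}-\frac{2\langle\nabla u,\nabla v\rangle}{v^2}-\frac{u\Delta v}{v^2}+\frac{2u|\nabla v|^2}{v^3}.
\end{equation*}
Next, since $\nabla\log v=\nabla v/v$,
\begin{equation*}
2\langle\nabla(u/v),\nabla\log v\rangle=\frac{2\langle\nabla u,\nabla v\rangle}{v^2}-\frac{2u|\nabla v|^2}{v^3}.
\end{equation*}
Adding the two lines, the cross terms cancel exactly. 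What remains is
\begin{equation*}
\Delta(u/v)+2\langle\nabla(u/v),\nabla\log v\rangle=\frac{\Delta u}{v}-\frac{u}{v^2}\Delta v.
\end{equation*}

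\emph{The drift terms.} The weighted operators differ from the unweighted ones only by first-order terms. On the left-hand side this extra term is $-\langle\nabla h,\nabla(u/v)\rangle=-\frac{\langle\nabla h,\nabla u\rangle}{v}+\frac{u\langle\nabla h,\nabla v\rangle}{v^2}$. On the right-hand side it is $-\frac{\langle\nabla h,\nabla u\rangle}{v}+\frac{u}{v^2}\langle\nabla h,\nabla v\rangle$. These two expressions coincide, which proves the identity for $\Delta_h$.

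I expect no real obstacle here; the only care needed is the sign bookkeeping in the second derivative of the quotient.
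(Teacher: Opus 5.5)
Your computation is correct, and it is the direct verification the paper intends: the paper calls this a ``straightforward computational lemma'' and omits the proof. Splitting it into the unweighted quotient identity plus the observation that the first-order drift terms $-\langle\nabla h,\cdot\rangle$ agree on both sides is a clean way to organize that check.
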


\begin{proof}[Proof of Theorem \ref{thm:extensionsmoczyck}]
Note first that the assumption that $|H|>0$, together with the fact that the immersion is proper, implies that $|B_\nu|\in L^2(M,e^{-f}\dvol)$. Indeed, as a consequence of equation \eqref{f_laplacian_norm_H_parallel_principal_normal} and of the Cauchy-Schwarz inequality, we obtain that for any $\varphi\in C^\infty_c(M)$ it holds
\begin{align*}
    \mathrm{div}_f(\varphi^2\nabla \log|H|)&=\varphi^2\Big(\frac12-|B_\nu|^2\Big)-\varphi^2|\nabla \log|H||^2+2\varphi\langle \nabla \varphi,\nabla \log|H|\rangle
    \\&\leq \varphi^2\Big(\frac12-|B_\nu|^2\Big)+|\nabla \varphi|^2.
\end{align*}
Let now $\varphi=\varphi_{R}$ be smooth cut-offs such that $\varphi_{R}=1$ on $B^{M}_{R}$, $\mathrm{supp}(\varphi_{R})\subset B^{M}_{2R}$ and $|{\nabla} \varphi|^2\leq \frac{4}{R^2}$. Then, we get
\begin{align*}
    \int_{B^{M}_{R}}|B_\nu|^2e^{-f}\dvol_M&\leq \int_{B^{M}_{2R}}\varphi_R^2|B_\nu|^2e^{-f}\dvol_M
    \\&\leq \frac12\int_{B^{M}_{2R}} \varphi_R^2e^{-f}\dvol_M+\int_{B^{M}_{2R}} |\nabla\varphi_R|^2e^{-f}\dvol_M
    \\&\leq \Big(\frac12+\frac{4}{R^2}\Big)\mathrm{Vol}_f(M).
\end{align*}
Taking the limit as $R\to+\infty$ and taking into account that $\mathrm{Vol}_f(M)<+\infty$ by the properness assumption, this yields that 
\begin{equation*}
    \int_{M}|B_\nu|^2e^{-f}\dvol_M\leq \frac12\mathrm{Vol}_f(M)<+\infty.
\end{equation*}
On the other hand, combining Lemma \ref{h_laplacian_fraction_lemma} with equations \eqref{f_laplacian_norm_H_parallel_principal_normal} and \eqref{f_laplacian_norm_principal_sff}, we have that
    \begin{align*}
        \Delta_f\left(\frac{|B_\nu|}{|H|}\right)+2\left\langle\nabla\left(\frac{|B_\nu|}{|H|}\right),\nabla \log |H|\right\rangle&=\frac{\Delta_f|B_\nu|}{|H|}-\frac{|B_\nu|}{|H|^2}\Delta_f|H|
        \\&= \frac{1}{|H||B_\nu|}\left(|\nabla B_\nu|^2-|\nabla |B_\nu||^2\right)
        \\&\geq 0.
    \end{align*}
    Setting $h=f-\log|H|^2$, this expression is equivalent to
    \begin{equation}\label{h_subharmonicity_smoczyk_extension}
        \Delta_h\left(\frac{|B_\nu|}{|H|}\right)= \frac{1}{|H||B_\nu|}\left(|\nabla B_\nu|^2-|\nabla |B_\nu||^2\right)\geq 0.
    \end{equation}
    In view of the fact that
    \begin{equation*}
        \left|\frac{B_\nu}{|H|}\right|_{L^2(M,e^{-h}\dvol)}=\left|B_\nu\right|_{L^2(M,e^{-f}\dvol)},
    \end{equation*}
    an $L^2$-Liouville theorem for $h$-subharmonic functions (cf. \cite{Pigola_vanishing_2005},\cite[Theorem 8]{Rimoldi_Class}) implies that the ratio is constant. This establishes the first condition \eqref{first_smoczyk_condition}, namely $|B_\nu|=c|H|$ for some $c>0$.
    Substituting this relationship back into \eqref{h_subharmonicity_smoczyk_extension} yields
    \begin{equation}\label{equivalence_norm_gradients_principal_sff_smoczyk}
        |\nabla B_\nu|=|\nabla |B_\nu||. 
    \end{equation}
    To establish the second condition \eqref{second_smoczyk_condition}, we begin with the following identity for a smooth $(0,2)$-tensor $T$:
    \begin{equation*}
        |T|^2 \big(|\nabla T|^2 - |\nabla |T||^2\big) = \frac{1}{2} |T_{jk} \nabla_i T_{pq} - T_{pq} \nabla_i T_{jk}|^2.
    \end{equation*}
    Thereby, if $|\nabla T|^2 = |\nabla |T||^2$ then $T_{jk} \nabla_i T_{pq} = T_{pq} \nabla_i T_{jk}$, yielding $T_{jk} \nabla_i \tr(T) = \tr(T) \nabla_i T_{jk}$. Applying this result to the scalar symmetric tensor $h^{n+1}$ and invoking \eqref{equivalence_norm_gradients_principal_sff_smoczyk} yields $\nabla_i |H| h^{n+1}_{jk} = |H| \nabla_i h^{n+1}_{jk}$, which is precisely \eqref{second_smoczyk_condition}.

    Following the proof of Theorem 1.2 in \cite{smoczyk_2005}, identities \eqref{first_smoczyk_condition} and \eqref{second_smoczyk_condition} demonstrate that either $\nabla^\perp H \equiv 0$, or $h^{n+1}$ has exactly one non-zero eigenvalue, namely $|H|$. These two cases can be analyzed separately.
    
    \begin{enumerate}[label=(\roman*)]
        \item Let us first consider the case where $\nabla^\perp H \equiv 0$. As in \cite{smoczyk_2005}, we introduce the symmetric tensor $P=\langle H,B \rangle=|H|h^{n+1}$. We point out that, because of our convention for self-shrinkers, the tensor $P$ itself is not a projection, as in \cite{smoczyk_2005}. Instead, setting 
        \[
        \ \hat{P}\doteq2P,
        \]
        one has $\hat{P}^2=\hat{P}$. Thus $\hat{P}$ is an orthogonal projection. We also set $(\hat{P}*B)_{ij}= \hat{P}_i^k B_{kj}$.
        Setting $\theta=\nabla f$, a crucial step in Smoczyk's proof relies on showing that $B_{ij}=\hat{P}_i^lB_{lj}$. Since $\hat{P}$ is a projection, this reduces to proving that $|B|^2-|\hat{P}*B|^2=0$. 
    
        If $\theta(p)=0$ at a point $p\in M$, the result follows immediately from the identity
        \begin{equation}\label{auxliary_equation_first_part_smoczyk}
            \theta^k\nabla_k(|B|^2-|\hat{P}*B|^2)=-(|B|^2-|\hat{P}*B|^2).
        \end{equation}
        If $\theta(p)\neq 0$, we consider the integral curve $\gamma$ of $\theta$ starting at $p$, defined by $\frac{d}{dt}\gamma(t)=\theta(\gamma(t))$ with $\gamma(0)=p$. Since $\nabla^{\perp}H=0$, we have $P(\theta, \cdot)=0$. Recalling that 
        \[
        \ \nabla \theta=\frac{1}{2}g-P,
        \]
        we obtain
        \[
        \ \nabla_{\theta}\theta=\frac{1}{2}\theta.
        \]
        Therefore
        \[
        \ \frac{d}{dt}|\theta|^{2}(\gamma(t))=|\theta|^{2}(\gamma(t)),
        \]
        and hence
        \[
        \ |\theta|^{2}(\gamma(t))=|\theta|^{2}(p)e^{t}>0.
        \]
        In particular, the curve is regular and has bounded speed on every bounded time interval. By completeness of $M$ and the standard continuation theorem for integral curves, $\gamma$ is well-defined for all $t\in\mathbb R$. We now define the auxiliary function $\tilde f(t) \doteq (|B|^2-|\hat{P}*B|^2)(\gamma(t))$ along $\gamma$. By applying \eqref{auxliary_equation_first_part_smoczyk}, we see that $\tilde f(t)$ satisfies the evolution equation $\frac{d}{dt}\tilde f(t)=-\tilde f(t)$, which yields:
        \begin{equation}\label{exponential_growth_auxiliary_function_smoczyk}
            (|B|^2-|\hat{P}*B|^2)(\gamma(t))=(|B|^2-|\hat{P}*B|^2)(p)e^{-t}.
        \end{equation}
        At this stage, instead of using the boundedness of $|B|$, we adapt the argument as follows. Observe that the derivative of $|x|^2$ along the curve is nonnegative:
        \begin{equation*}
            \frac{d}{dt}|x|^2(\gamma(t))=\left\langle\nabla|x|^2(\gamma(t)),\frac{d}{dt}\gamma(t)\right\rangle=4|\theta|^2(\gamma(t))\geq 0.
        \end{equation*}
        Consequently, the backward trajectory is contained in a sub-level set of $|x|^2$:
        \begin{equation*}
            \left\{\gamma(t)\:|\:t\leq 0\right\}\subseteq K\doteq\left\{\tilde p\in M\:|\:|x|^2(\tilde p)\leq |x|^2(p)\right\}.
        \end{equation*}
        By the properness of the immersion, $K$ is a compact subset of $M$. Choosing a sequence $t_n\to-\infty$, compactness guarantees that, up to passing to a subsequence, $\gamma(t_n)\to q\in K\subseteq M$. Being $|B|^2-|\hat{P}*B|^2$ a smooth function over $M$, the limit $\lim_{n\to\infty}\tilde f(t_n)=(|B|^2-|\hat{P}*B|^2)(q)$ is finite. However, if $(|B|^2-|\hat{P}*B|^2)(p) \neq 0$, equation \eqref{exponential_growth_auxiliary_function_smoczyk} implies that this same limit must be unbounded, which is a contradiction. Therefore, $(|B|^2-|\hat{P}*B|^2)(p)$ must vanish. From here, the proof concludes, exactly as in \cite{smoczyk_2005}, that $M=\tilde{M}^{r}\times \mathbb{R}^{n-r}$, where $\tilde{M}^{r}$ is a complete minimal submanifold of $\mathbb{S}^{p+r-1}(\sqrt{2r})$.
        \newline
        \item The proof of the remaining case, where $\nabla^\perp H(p_0)\neq 0$ at some $p_0\in M$, is similar. In this scenario, one first proves that $|P|^2=|H|^4$ globally, \cite{smoczyk_2005}. This implies that the tensor $P$ admits only one non-zero eigenvalue, namely $|H|^2$, and that $\nabla|H|/|\nabla|H||$ spans the corresponding eigenspace. 
        Following this, Smoczyk assumes the boundedness of $|B|$ to establish that $B_I$ vanishes identically. We adapt the argument as follows.
        
        Let $U$ be a connected component of the open set $\mathring{M} \doteq \{\tilde p \in M \:|\: \nabla |H|(\tilde p) \neq 0\}$. On $U$, we consider the two distributions 
        \begin{align*}
            \mathcal{E}_{\tilde p}U&\doteq \{X\in T_{\tilde p}U\:|\: \tilde p \in U,\: PX=|H|^2X\},
            \\\mathcal{F}_{\tilde p}U&\doteq\{X\in T_{\tilde p} U\:|\: \tilde p \in U,\: PX=0\}.
        \end{align*}
        Because $P$ is symmetric, the spectral theorem guarantees that these eigenspaces are orthogonal. We then define the vector field $\mathring\theta$ as the orthogonal projection of $\theta$ onto the distribution $\mathcal{F}U$. From
        \[
        \ P(\theta)=|H|\nabla |H|
        \]
        we obtain
        \[
        \ \mathring{\theta}=\theta-\frac{1}{|H|}\nabla |H|.
        \]
        As in \cite{smoczyk_2005}, setting
        \[
        \ B_{I}=B-\frac{1}{|H|}P\nu
        \]
        a straightforward computation gives
        \[
        \ \mathring{\theta}^{k}(B_{I})_{ki}=0.
        \]
        Differentiating this identity and using the Codazzi equation, one obtains, with our normalization,
        \[
        \ \mathring{\theta}^{k}\nabla^{\perp}_{l}(B_{I})_{ki}=-\frac{1}{2}(B_{I})_{li}.
        \]
        
        If $\mathring \theta(p)=0$, the previous equation implies $|B_I|(p)=0$. Alternatively, if $\mathring \theta(p)\neq 0$, we consider the maximal integral curve $\gamma:(a,b)\to U$ of $\mathring \theta$ with the initial condition $\gamma(0)=p$. Since $\mathcal{F}U$ is invariant under parallel transport, as in \cite{smoczyk_2005} one has
        \[
        \ \nabla_{i}\nabla_{j}|H|=\frac{\Delta|H|}{|H|^{2}}P_{ij}.
        \] 
        Since $P(\mathring{\theta})=0$, it follows that
        \[
        \ \nabla_{\mathring{\theta}}\nabla |H|=0.
        \]
        Therefore, along $\gamma$,
        \[
        \ \frac{d}{dt}|\nabla|H||^{2}(\gamma(t))=0,
        \]
        and hence
        \begin{equation}\label{Hgamma}
        \ |\nabla |H||(\gamma (t))=|\nabla |H||(p)>0.
        \end{equation}
        In particular, the integral curve cannot reach the boundary of $U$.
        
        Furthermore, in our normalization,
        \[
        \ \frac{d}{dt}|\mathring{\theta}|^{2}(\gamma(t))=|\mathring{\theta}|^{2}(\gamma(t)),
        \]
        and therefore
        \begin{equation}\label{theta0}
        \ |\mathring{\theta}|^{2}(\gamma(t))=|\mathring{\theta}|^{2}(p)e^{t}.
        \end{equation}
        Thus $\gamma$ is regular and has bounded speed on every bounded time interval. Suppose, for instance, that $b<+\infty$. By \eqref{theta0}, $\gamma$ has finite length on $[0,b)$, hence $\gamma(t)$ is Cauchy as $t\to b$. Since $M$ is complete, it converges to some $q\in M$. By \eqref{Hgamma},
        \[
        \ |\nabla |H||(q)=|\nabla|H||(p)>0,
        \]
        and therefore $q\in U$. Since $\mathring{\theta}$ is smooth in $U$, the integral curve can be extended beyond $b$, contradicting maximality. The same argument at the other endpoint shows that $a=-\infty$. Hence $(a,b)=\mathbb{R}$ and $\gamma(t)$ is contained in $U$ for all $t\in\mathbb R$. 
        
        If we assume by contradiction that $|B_I|(p)\neq 0$, reasoning as in \cite{smoczyk_2005}, one can prove that the quantity $|B_I|^2$ has an exponential growth in $t$, yielding
        \begin{equation*}
            |B_I|^2(\gamma(t))=|B_I|^2(p)e^{-t}.
        \end{equation*}
        By evaluating the derivative of $|x|^2$ along $\gamma(t)$, we obtain:
        \begin{equation*}
            \frac{d}{dt}|x|^2(\gamma(t))=\left\langle\nabla |x|^2(\gamma(t)),\frac{d}{dt}\gamma(t)\right\rangle=4\langle\theta(\gamma(t)),\mathring\theta(\gamma(t))\rangle=4|\mathring\theta|^2(\gamma(t))\geq 0.
        \end{equation*}
        The monotonicity allows us to bound the backward trajectory within a compact sub-level set of $|x|^2$. Using the properness of the immersion, we can apply exactly the same compactness and convergence arguments as in the previous scenario to reach a contradiction. Thus $B_{I}$ vanish identically and this permits to conclude the classfication as in \cite{smoczyk_2005}.
     \end{enumerate}
\end{proof}

\section{Proof of the main result}

This section is devoted to the proof of Theorem \ref{main_theorem}. 
In order to do that we will need the following two lemmas.

\begin{lem}[\cite{Huisken84}]\label{gradient_estimate_theorem_cp_5}
    For any immersed submanifold $M^n$ (without boundary) of the Euclidean space $\mathbb R^{n+p}$, the gradient estimate
    \begin{equation*}
        |\nabla B|^2\geq\frac{3}{n+2}|\nabla^\perp H|^2
        \end{equation*}
        holds.
\end{lem}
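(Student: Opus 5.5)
We prove the estimate pointwise and purely algebraically, by decomposing $\nabla B$ with respect to the symmetries that come from the Codazzi equation. Fix a point and an orthonormal frame, and write the components of $\nabla B$ as $h^\alpha_{ijk}$. By Codazzi's equation $h^\alpha_{ijk}=h^\alpha_{ikj}$, and since $h^\alpha_{ij}$ is symmetric in $i,j$, each normal component $T^\alpha\doteq (h^\alpha_{ijk})_{i,j,k}$ is a totally symmetric $3$-tensor on $T_pM$. Its trace is $\sum_i h^\alpha_{iik}=\nabla^\perp_k H^\alpha$, because taking the normal covariant derivative commutes with tracing in $i,j$. It therefore suffices to prove, for each $\alpha$ separately, the inequality
\[
|T|^2\geq \frac{3}{n+2}\,|\tr T|^2
\]
for any totally symmetric $3$-tensor $T$ on $\mathbb R^n$, and then sum over $\alpha$.

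To prove this, split $T$ orthogonally into its trace-free part and a pure-trace part. Let $v_k\doteq\sum_i T_{iik}$ and define
\[
E_{ijk}\doteq \frac{1}{n+2}\left(v_i\delta_{jk}+v_j\delta_{ik}+v_k\delta_{ij}\right).
\]
This $E$ is totally symmetric, and its trace is $\sum_i E_{iik}=\frac{1}{n+2}(v_k+v_k+n v_k)=v_k$. So $F\doteq T-E$ is totally symmetric with zero trace in every pair of indices.

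Next check orthogonality: $\langle F,E\rangle=\frac{3}{n+2}\sum F_{ijk}v_i\delta_{jk}=0$, since $F$ is trace-free. Hence
\[
|T|^2=|F|^2+|E|^2\geq |E|^2 .
\]

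Finally, compute $|E|^2$ directly. Squaring the three terms gives $3n|v|^2$. Each of the six cross terms, such as $\sum v_i\delta_{jk}v_j\delta_{ik}$, equals $|v|^2$. Therefore
\[
|E|^2=\frac{(3n+6)|v|^2}{(n+2)^2}=\frac{3}{n+2}|v|^2 .
\]
Applying this with $T=T^\alpha$ and $v=\nabla^\perp H^\alpha$, and summing over $\alpha$, gives $|\nabla B|^2\geq\frac{3}{n+2}|\nabla^\perp H|^2$.

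The only point that needs care is the total symmetry of $h^\alpha_{ijk}$, which uses the normal-bundle Codazzi equation recalled earlier in the paper (valid because the ambient space is flat). Everything after that is linear algebra. No assumption on the mean curvature, such as $|H|>0$, is needed, since the whole argument is pointwise and frame-independent.
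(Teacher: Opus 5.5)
Your proof is correct and is the standard argument of \cite{Huisken84}: split $\nabla B$ into the pure-trace part $E^\alpha_{ijk}=\frac{1}{n+2}\bigl(\nabla^\perp_i H^\alpha\delta_{jk}+\nabla^\perp_j H^\alpha\delta_{ik}+\nabla^\perp_k H^\alpha\delta_{ij}\bigr)$ plus an orthogonal trace-free remainder, with Codazzi giving the total symmetry. The paper only cites the lemma without proof, and Huisken's original statement is for hypersurfaces; your normal-component-wise application is the usual extension to arbitrary codimension (as in \cite{andrews_baker_2010}).
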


\begin{lem}\label{lemmaHequiv0}
    Let $x:M^n\to \mathbb R^{n+p}$ ($n\geq 2$, $p\geq 1$) be a self-shrinker without boundary. If $M$ is $\frac{4}{3n}$-quadratically pinched then either $H\equiv 0$ (and $M$ is a $n$-plane through the origin) or $|H|>0$.
\end{lem}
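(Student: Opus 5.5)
We argue pointwise at zeros of $H$ and then use connectedness. The pinching condition $|B|^2\le \frac{4}{3n}|H|^2$ forces $B(p_0)=0$ at any point $p_0$ where $H(p_0)=0$. So $Z\doteq\{p\in M: H(p)=0\}$ is contained in $\{B=0\}$, and we want to show that $Z$ is either empty or all of $M$.

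The natural route is to prove that $Z$ is open. Since $Z$ is closed and $M$ is connected, this gives the dichotomy.

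\textbf{Step 1: first derivatives vanish on $Z$.} At $p_0\in Z$ we have $\nabla^\perp H=B(\nabla f,\cdot)$ by \eqref{normal_gradient_H}. Since $B(p_0)=0$, this gives $\nabla^\perp H(p_0)=0$. Next, $|B|^2-\frac{4}{3n}|H|^2\le 0$ on $M$ and equals $0$ at $p_0$, so $p_0$ is a maximum point. Hence its Hessian there is nonpositive, and taking traces gives $\Delta_f\big(|B|^2-\frac{4}{3n}|H|^2\big)(p_0)\le 0$. By \eqref{f_laplacian_squared_norm_sff_generic_estimate} at $p_0$, where $B=0$, we get $\Delta_f|B|^2(p_0)\ge 2|\nabla B|^2(p_0)$. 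By \eqref{f_laplacian_squared_norm_H_generic_application}, $\Delta_f|H|^2(p_0)=2|\nabla^\perp H|^2(p_0)=0$. Combining, $|\nabla B|(p_0)=0$. (Lemma \ref{gradient_estimate_theorem_cp_5} gives the same conclusion directly, but it is not needed.)

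\textbf{Step 2: all derivatives vanish on $Z$ (the main obstacle).} Vanishing to first order is not enough for openness, so I plan to show by induction that $\nabla^kB(p_0)=0$ for every $k$. The tools are the self-shrinker structure: the Simons-type identity $\Delta^\perp h^\alpha_{ij}=\frac12 h^\alpha_{ij}+\frac12\langle x,e_k\rangle h^\alpha_{ijk}+(\text{cubic in } B)$, obtained from \eqref{sff_laplacian_1} and \eqref{normal_second_covariant_derivatives_H}. This is an elliptic system $\Delta_f^\perp B=\frac12 B+B*B*B$ for $B$.

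I would \emph{try first} to avoid the full induction by a unique continuation argument. The system $\Delta^\perp_f B=\frac12 B+B*B*B$ is linear in $B$ with coefficients depending on $B$ and $x$, and its principal part is diagonal Laplacian, so Aronszajn's strong unique continuation theorem applies. It then suffices to show that $B$ vanishes to infinite order at $p_0$.

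To get infinite order, use the pinching again. Suppose $|B|=O(r^k)$ near $p_0$. Since $H$ solves $\Delta_f^\perp H=\frac12 H-\langle B,H\rangle B$, the identity $LH=H$ of \cite{coldingminicozzi_Par}, one has $|B|^2\le C|H|^2$. I would then aim to compare the vanishing orders of $H$ and $B$. Heuristically, the leading homogeneous part of $B$ is a Codazzi tensor whose trace is the leading part of $H$, and that part must also satisfy the pinching inequality. I expect this bootstrapping to be the delicate point; if it fails, the fallback is the analytic continuation of the first-order argument above, using the maximum principle on $|B|^2-\frac{4}{3n}|H|^2$ at each order.

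\textbf{Step 3: conclusion.} With $B\equiv0$ on a neighbourhood of $p_0$, unique continuation gives $B\equiv0$ on the connected manifold $M$. Hence $M$ is totally geodesic, an $n$-plane. The equation $x^\perp=-2H=0$ then forces the plane to pass through the origin. Otherwise $Z=\emptyset$, that is, $|H|>0$ everywhere.
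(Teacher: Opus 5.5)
\textbf{There is a genuine gap, in Step 2.} Your Step 1 is correct. But Step 2 carries the whole content of the lemma, and it is not proved. From $B(p_0)=0$ and $\nabla B(p_0)=0$ nothing follows about openness of $Z$. You need infinite-order vanishing of $B$ at $p_0$, and for this you give only a heuristic.

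The proposed fallback cannot work. You suggest running the maximum principle for $\phi=|B|^2-\frac{4}{3n}|H|^2$ ``at each order'' at $p_0$. Once $B=O(r^2)$ near $p_0$, which Step 1 gives, also $|H|^2\le n|B|^2=O(r^4)$. So $\phi=O(r^4)$ and its Hessian at $p_0$ vanishes. The pointwise condition $\Delta_f\phi(p_0)\le 0$ then carries no information about $\nabla^2B(p_0)$ or any higher derivative. A sign condition at a single maximum point does not propagate; you need an inequality on a whole neighbourhood. (Incidentally, $|B|^2\le C|H|^2$ is your hypothesis, not a consequence of $LH=H$.)

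\textbf{The missing idea} is to promote your Step 1 computation to a \emph{global} differential inequality and apply the strong maximum principle. This is the paper's proof, and it needs no unique continuation.

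Set $\omega=\frac{4}{3n}|H|^2-|B|^2\ge 0$. At zeros of $H$, use \eqref{f_laplacian_squared_norm_H_generic_application} and \eqref{f_laplacian_squared_norm_sff_generic_estimate}. Where $|H|>0$, use \eqref{f_laplacian_squared_norm_H_nonvanishing} and \eqref{f_laplacian_squared_norm_sff_H_nonvanishing_estimate}. Then apply Lemma \ref{gradient_estimate_theorem_cp_5}: you called it unnecessary, but it is indispensable off $Z$, where $\nabla^\perp H\neq 0$ in general. Together with $\frac{4}{3n}<\frac{3}{n+2}$ this gives
\[
\Delta_f\omega\le 2\bigl(\tfrac{4}{3n}-\tfrac{3}{n+2}\bigr)|\nabla^\perp H|^2+\omega\le\omega
\]
on all of $M$. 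The strong maximum principle, applied to the nonnegative supersolution $\omega$ of $\Delta_f-1$, then gives two cases. Either $\omega\equiv 0$, in which case the inequality forces $\nabla^\perp H\equiv 0$ and $|H|$ is constant. Or $\omega>0$, in which case $H$ has no zeros.

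If you want to keep your route, Step 2 can only be closed by the same mechanism applied to a blow-up. Suppose $B\not\equiv 0$. Aronszajn's theorem gives a lowest nonzero Taylor part $P$ of $B$ at $p_0$. This $P$ is a harmonic, Codazzi, homogeneous tensor on $\mathbb{R}^n$ with $|P|^2\le\frac{4}{3n}|\tr P|^2$. By Huisken's estimate, $\frac{4}{3n}|\tr P|^2-|P|^2$ is a nonnegative superharmonic function vanishing at the origin, hence identically zero. That forces $\tr P$ to be constant, so $\tr P=0$ and $P=0$, a contradiction.
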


\begin{proof}
    Consider the auxiliary function
    \begin{equation*}
        \omega=\frac{4}{3n}|H|^2-|B|^2.
    \end{equation*}
    At points $p_0\in M$ where $|H|(p_0)=0$, also the auxiliary function vanishes, i.e. $\omega(p_0)=0$, because of the pinching condition. Thus equations \eqref{f_laplacian_squared_norm_H_generic_application} and \eqref{f_laplacian_squared_norm_sff_generic_estimate}, together with Lemma \ref{gradient_estimate_theorem_cp_5}, yield
    \begin{align*}
    (\Delta_f\omega)(p_0)&\leq2\left(\frac{4}{3n}|\nabla^\perp H|^2(p_0)-|\nabla B|^2(p_0)\right) -|B|^2(p_0)(1-3|B|^2(p_0))
    \\&\leq2\left(\frac{4}{3n}-\frac{3}{n+2}\right)|\nabla^\perp H|^2(p_0)
    \\&\leq \omega(p_0).
    \end{align*}
    By combining \eqref{f_laplacian_squared_norm_H_nonvanishing}, \eqref{f_laplacian_squared_norm_sff_H_nonvanishing_estimate}, and Lemma \ref{gradient_estimate_theorem_cp_5}, at points $p_1\in M$ where $|H|(p_1)>0$ we obtain the differential inequality 
    \begin{align*}
        (\Delta_f\omega)(p_1)&\leq2\left(\frac{4}{3n}|\nabla^\perp H|^2(p_1)-|\nabla B|^2(p_1)\right) +\omega(p_1)(1-2|B_\nu|^2(p_1))\nonumber
        \\&\leq 2\left(\frac{4}{3n}-\frac{3}{n+2}\right)|\nabla^\perp H|^2(p_1)+\omega(p_1)
        \\&\leq \omega(p_1).
    \end{align*} 
    Therefore, the following differential inequality is satisfied globally:
    \begin{equation}\label{differential_inequality_strong_maximum_principle_cp_5}
        \Delta_f\omega\leq 2\left(\frac{4}{3n}-\frac{3}{n+2}\right)|\nabla^\perp H|^2+\omega\leq \omega.
    \end{equation}
    The strong maximum principle for elliptic operators (see, e.g., Theorem 3.5 in \cite{gilbarg_trudinger_2001_PDEs}) applied to \eqref{differential_inequality_strong_maximum_principle_cp_5} allows us to deduce that either $\omega\equiv 0$ or $\omega>0$.

    Consider the case $\omega\equiv 0$. Then equation \eqref{differential_inequality_strong_maximum_principle_cp_5} implies that $\nabla^{\perp}H\equiv 0$ on $M$, i.e. $|H|$ is constant on $M$, being $M$ connected. This implies that either $|H|\equiv 0$ or $|H|\equiv const.>0$.

    Finally, in case $\omega>0$, the pinching condition implies that $H$ cannot vanish at any point $p_{0}$, otherwise we would have $\omega(p_0)=0$, which is a contradiction. Then $|H|>0$ everywhere.
\end{proof}

\begin{proof}[Proof of Theorem \ref{main_theorem}]
    Note first that, as a consequence of Lemma \ref{lemmaHequiv0}, either $H\equiv$ 0 (and $M$ is a $n$-plane through the origin) or $|H|>0$ everywhere. In this last case, combining Simons' estimates \eqref{f_laplacian_squared_norm_H_nonvanishing} and \eqref{f_laplacian_squared_norm_sff_H_nonvanishing_estimate} with Lemma \ref{h_laplacian_fraction_lemma} and the pinching condition $|B|^2\leq\frac{4}{3n}|H|^2$, we obtain the following estimate for the $f$-Laplacian of $|B|^2/|H|^2$:
    \begin{equation}\label{f_laplacian_ratio_BH_nonnegativity}
        \begin{aligned}
            \phantomsection
            \Delta_f\left(\frac{|B|^2}{|H|^2}\right)+2\left\langle\nabla\left(\frac{|B|^2}{|H|^2}\right),\nabla\log |H|^2\right\rangle&\geq\frac{2}{|H|^2}\left(|\nabla B|^2-\frac{|B|^2}{|H|^2}|\nabla^\perp H|^2\right)
            \\&\geq \frac{2}{|H|^2}\left(|\nabla B|^2-\frac{4}{3n}|\nabla^\perp H|^2\right)
            \\&\geq 0,
        \end{aligned}
    \end{equation}
    Here, the last inequality follows from Lemma \ref{gradient_estimate_theorem_cp_5} and the fact that $\frac{3}{n+2}\geq\frac{4}{3n}$ for every $n\geq 2,\:n\in\mathbb N$. 
  
    Let $h=f-\log|H|^4$. Observing that $\Delta_h=\Delta_f+2\langle\nabla(\cdot),\nabla\log|H|^2\rangle$, we can rewrite \eqref{f_laplacian_ratio_BH_nonnegativity} as
    \begin{equation*}
        \begin{cases}
            \Delta_h\left(\frac{|B|^2}{|H|^2}\right)\geq 0,
            \\\sup_M\frac{|B|^2}{|H|^2}\leq\frac{4}{3n}<+\infty.
        \end{cases}
    \end{equation*}
    The $h$-volume of the self-shrinker is given by
    \begin{equation*}
        \Vol_h(M)=\int_M|H|^4e^{-\frac{|x|^2}{4}}\dvol_M.
    \end{equation*}
   The relation $|H|^4\leq \frac{1}{16}|x^\perp|^4\leq \frac{1}{16}|x|^4$ implies that this integral is finite under the polynomial volume growth assumption. Hence $M$ is $h$-parabolic and this implies that the quotient $|B|^2/|H|^2$ must be constant.
    In particular 
    \[
    \ 0=\Delta_{h}\left(\frac{|B|^{2}}{|H|^{2}}\right)\geq\frac{2}{|H|^{2}}\left(|\nabla B|^{2}-\frac{|B|^{2}}{|H|^{2}}|\nabla^{\perp} H|^{2}\right)+\frac{3|B^{I}|^{4}}{|H|^{2}}.
    \]
    Using Lemma \ref{gradient_estimate_theorem_cp_5} and the pinching condition, we get
    \[
    \ |\nabla B|^{2}-\frac{|B|^{2}}{|H|^{2}}|\nabla^{\perp}H|^{2}\geq\left(\frac{3}{n+2}-\frac{4}{3n}\right)|\nabla^{\perp}H|^{2}.
    \]
    Since the coefficient on the RHS is positive for $n\geq 2$, it follows that
    \[
    \ \nabla^{\perp}H\equiv 0,\quad B_{I}\equiv0.
    \]
    Since $H=|H|\nu$ and $|H|>0$, we conclude immediately that $|H|$ is constant and $\nabla^{\perp}\nu\equiv 0$, i.e. $\nu=H/|H|$ is parallel in the normal bundle. The condition $B_I\equiv 0$ implies that the shape operators satisfy $S_{\nu_\alpha}\equiv 0$ for all $\alpha\geq n+2$. Consequently, the orthogonal complement $N_1(x)$ of the subspace $\{\xi\in N_xM \mid B_{\xi}=0\}$ in $N_xM$ is the line spanned by $\nu_{n+1}$, which is thus invariant under parallel translations in the normal bundle. By Theorem $1$ in \cite{erbacher_1971}, the self-shrinker $M$ actually lies in a $(n+1)$-dimensional affine subspace of $\mathbb R^{n+p}$. Thus Theorem \ref{colding_minicozzi_theorem} forces $M$ to be isometric either to $\Gamma\times\mathbb R^{n-1}$ or to $S^k(\sqrt{2k})\times \mathbb R^{n-k}$, with $1\leq k\leq n$. The product of Abresch-Langer curves with the Euclidean factor does not satisfy the $\frac{4}{3n}$-pinching condition, as $|B|^2=|H|^2$. In contrast, the generalized cylinders satisfy this quadratic pinching condition if and only if $\lceil\frac{3n}{4}\rceil \leq k \leq n$. To see this, let $\lambda_1, \ldots, \lambda_n$ be the principal curvatures of $S^k(\sqrt{2k})\times\mathbb{R}^{n-k}$. It is straightforward to verify that $|B|^2 = \frac{1}{2}$ and $|H|^2 = \frac{k}{2}$, thus the condition $|B|^2 \leq \frac{4}{3n}|H|^2$ implies the required bound on $k$.
\end{proof}

\section*{Data availability}
Data sharing not applicable to this article as no datasets were generated or analyzed during
the current study.

\section*{Conflict of interest statement}
On behalf of all authors, the corresponding author states that there is no conflict of interest.

\bibliographystyle{amsalpha}
\bibliography{bibliography}

\end{document}